%


\documentclass{ws-book9x6}
\usepackage{ws-book-van}   
\usepackage{hyperref}  
\usepackage{amsmath,amssymb,amsthm}
\usepackage{centernot}
\usepackage{soul}
\usepackage{tikz}
\usetikzlibrary{arrows}
\usetikzlibrary{bending}

\begin{document}


%

\newcommand{\fq}{\mathbb{F}_q}
\newcommand{\diam}{{\rm diam} }
\newcommand{\dist}{{\rm dist} }

\theoremstyle{plain}
\newtheorem{theorem}{Theorem}[section]
\newtheorem{proposition}{Proposition}[section]
\newtheorem{lemma}{Lemma}[section]
\newtheorem{cor}{Corollary}[section]
\theoremstyle{definition}
\newtheorem{example}[theorem]{Example}
\newtheorem{remark}{Remark}
\newtheorem{conjecture}{Conjecture}[section]

\chapter[Diameter of some monomial digraphs]
{
Diameter of some monomial digraphs.
}

\author{A. Kodess}
\address{University of Rhode Island, Kingston, RI, USA \url{kodess@uri.edu}}

\author{F. Lazebnik}
\address{University of Delaware, Newark, DE, USA \url{fellaz@udel.edu}}

\author{S. Smith}
\address{University of Delaware, Newark, DE, USA \url{smithsj@udel.edu}}

\author{J. Sporre}
\address{University of Delaware, Newark, DE, USA \url{jsporre@udel.edu}}

\section{Introduction}
For all  terms related to digraphs which are not defined below, see Bang-Jensen and Gutin \cite{Bang_Jensen_Gutin}.
In this paper,
by a {\it directed graph} (or simply {\it digraph)}
$D$ we mean a pair $(V,A)$, where
$V=V(D)$ is the set of vertices and $A=A(D)\subseteq V\times V$ is the set of arcs.
For an arc $(u,v)$, the first vertex $u$ is called its {\it tail} and the second
vertex $v$ is called its {\it head}; we also denote such an arc by $u\to v$.
If $(u,v)$ is an arc,  we call $v$  an {\it out-neighbor}  of $u$, and $u$ an {\it in-neighbor} of $v$.
The number of out-neighbors of  $u$ is called the {\it out-degree} of $u$, and the number of in-neighbors of $u$ --- the {\it in-degree} of $u$.
For an integer $k\ge 2$,  a {\it walk} $W$ {\it from} $x_1$ {\it to} $x_k$ in $D$ is an alternating sequence
$W = x_1 a_1 x_2 a_2 x_3\dots x_{k-1}a_{k-1}x_k$ of vertices $x_i\in V$ and arcs $a_j\in A$
such that the tail of $a_i$ is $x_i$ and the head of $a_i$ is $x_{i+1}$ for every
$i$, $1\le i\le k-1$.
Whenever the labels of the arcs of a walk are not important, we use the notation
$x_1\to x_2 \to \dotsb \to x_k$ for the walk, and say that we have an $x_1x_k$-walk.
In a digraph $D$, a vertex $y$ is {\it reachable} from a vertex $x$ if $D$ has a walk from $x$ to $y$. In
particular, a vertex is reachable from itself. A digraph $D$ is {\it strongly connected}
(or, just {\it strong}) if, for every pair $x,y$ of distinct vertices in $D$,
$y$ is reachable from $x$ and $x$ is reachable from $y$.
A {\it strong component} of a digraph $D$ is a maximal induced subdigraph of $D$ that is strong.
If $x$ and $y$ are vertices of a digraph $D$, then the
{\it distance from x to y} in $D$, denoted $\dist(x,y)$, is the minimum length of
an $xy$-walk, if $y$ is reachable from $x$, and otherwise $\dist(x,y) = \infty$.
The {\it distance from a set $X$ to a set $Y$} of vertices in $D$ is
\[
\dist(X,Y) = \max
\{
\dist(x,y) \colon x\in X,y\in Y
\}.
\]
The {\it diameter} of $D$ is $\diam(D) = \dist(V,V)$.


Let $p$ be a prime, $e$ a positive integer, and $q = p^e$. Let
$\fq$ denote the finite field of $q$ elements, and  $\fq^*=\fq\setminus\{0\}$.

Let $\fq^2$ 
denote the Cartesian product $\fq \times \fq$, and let
 $f\colon\fq^2\to\fq$ be an arbitrary function. We define a  digraph $D = D(q;f)$  as follows:
 $V(D)=\fq^{2}$, and
there is an arc from a vertex ${\bf x} = (x_1,x_2)$ to a vertex
${\bf y} = (y_1,y_{2})$ if and only if
\[
x_2 + y_2 = f(x_1,y_1).
\]

If $(x,y)$ is an arc in $D$, then ${\bf y}$ is uniquely determined by  ${\bf x}$ and $y_1$, and ${\bf x}$ is uniquely determined by  ${\bf y}$ and $x_1$.
Hence,  each vertex of $D$ has both its in-degree and out-degree equal to $q$.

By Lagrange's interpolation,
 $f$ can be uniquely represented by
a bivariate polynomial of degree at most $q-1$ in each of the variables.  If  ${f}(x,y) =  x^m y^n$, $1\le m,n\le q-1$, we call  $D$ a {\it monomial} digraph, and denote it also by  $D(q;m,n)$. Digraph $D(3; 1,2)$ is depicted in Fig.\ $1.1$. It is clear,  that ${\bf x}\to {\bf y}$ in $D(q;m,n)$ if and only if ${\bf y}\to {\bf x}$ in $D(q;n,m)$. Hence, one digraph is obtained from the other by reversing the direction of every arc. In general, these digraphs are not isomorphic, but if one of them is strong then so is the other  and their diameters are equal. As this paper is concerned only with the diameter of $D(q;m,n)$,  it is sufficient to assume that $1\le m\le n\le q-1$.

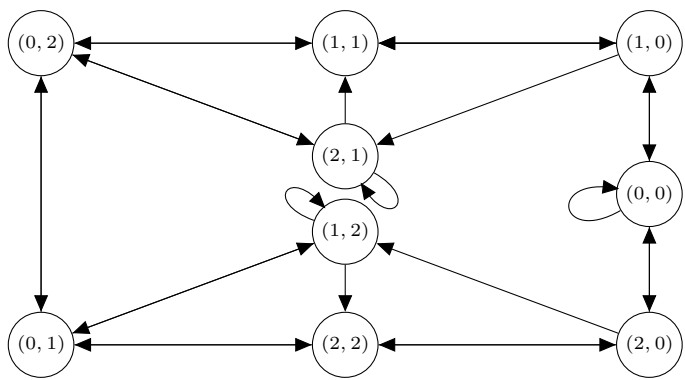
\begin{figure}
\begin{center}
\begin{tikzpicture}

\tikzset{vertex/.style = {shape=circle,draw,inner sep=2pt,minimum size=.5em, scale = 1.0},font=\sffamily\scriptsize\bfseries}
\tikzset{edge/.style = {->,> = triangle 45}}
\node[vertex] (a) at  (0,0) {$(0,2)$};
\node[vertex] (b) at  (4,0) {$(1,1)$};
\node[vertex] (c) at  (8,0) {$(1,0)$};
\node[vertex] (d) at  (0,-4) {$(0,1)$};
\node[vertex] (e) at  (4,-4) {$(2,2)$};
\node[vertex] (f) at  (8,-4) {$(2,0)$};
\node[vertex] (g) at  (4,-1.5) {$(2,1)$};
\node[vertex] (h) at  (4,-2.5) {$(1,2)$};
\node[vertex] (i) at  (8,-2) {$(0,0)$};

\draw[edge] (a) to (b);
\draw[edge] (b) to (a);

\draw[edge] (a) to (d);
\draw[edge] (d) to (a);

\draw[edge] (b) to (c);
\draw[edge] (c) to (b);

\draw[edge] (g) to (b);

\draw[edge] (h) to (e);
\draw[edge] (c) to (b);

\draw[edge] (d) to (e);
\draw[edge] (e) to (d);

\draw[edge] (e) to (f);
\draw[edge] (f) to (e);

\draw[edge] (c) to (i);
\draw[edge] (i) to (c);

\draw[edge] (f) to (i);
\draw[edge] (i) to (f);

\draw[edge] (g) to (a);
\draw[edge] (a) to (g);

\draw[edge] (c) to (g);

\draw[edge] (d) to (h);
\draw[edge] (h) to (d);

\draw[edge] (f) to (h);

\path
    (g) edge [->,>={triangle 45[flex,sep=-1pt]},loop,out=330,in=300,looseness=8] node {} (g);
\path
    (h) edge [->,>={triangle 45[flex,sep=-1pt]},loop,out=160,in=130,looseness=8] node {} (h);
\path
    (i) edge [->,>={triangle 45[flex,sep=-1pt]},loop,out=210,in=170,looseness=8] node {} (i);
\end{tikzpicture}
\caption{The digraph $D(3;1,2)$: $x_2+y_2 = x_1y_1^2$.}
\end{center}
\end{figure}
%
%

The digraphs $D(q; {f})$
and $D(q;m,n)$ are directed analogues
of
some algebraically defined graphs,  which have been studied extensively
and have many applications. See
Lazebnik and Woldar \cite{LazWol01} and references therein; for some
subsequent work see  Viglione \cite{Viglione_thesis},
Lazebnik and Mubayi \cite{Lazebnik_Mubayi},
Lazebnik and Viglione \cite{Lazebnik_Viglione},
Lazebnik and Verstra\"ete \cite{Lazebnik_Verstraete},
Lazebnik and Thomason \cite{Lazebnik_Thomason},
 Dmytrenko, Lazebnik and Viglione \cite{DLV05},
 Dmytrenko, Lazebnik and Williford \cite{DLW07},
 Ustimenko \cite{Ust07}, Viglione \cite{VigDiam08},
 Terlep and Williford \cite{TerWil12}, Kronenthal \cite{Kron12},
 Cioab\u{a}, Lazebnik and Li \cite{CLL14},
 Kodess \cite{Kod14},
and  Kodess and Lazebnik \cite{Kod_Laz_15}.

The questions of strong connectivity of digraphs $D(q;{f})$ and $D(q; m,n)$ and descriptions of their components were completely answered in
\cite{Kod_Laz_15}.  Determining the diameter of a component of $D(q;{f})$ for an arbitrary prime power $q$ and an arbitrary $f$ seems to be out of reach,  and most of our results below are concerned with some instances of this problem for strong monomial digraphs.  The following theorems are the main results of this paper.

\begin{theorem}
\label{main}
Let $p$  be a prime,  $e,m,n$ be  positive integers, $q=p^e$, $1\le m\le n\le q-1$,  and  $D_q= D(q;m,n)$. Then the following statements hold.
\begin{enumerate}
\item\label{gen_lower_bound}  If $D_q$ is strong, then $\diam (D_q)\ge 3$.

\item\label{gen_upper_bound}
If $D_q$ is strong, then
\begin{itemize}
\item for $e = 2$,   $\diam(D_q)\le 96\sqrt{n+1}+1$;
\item for $e \ge 3$, $\diam(D_q)\le 60\sqrt{n+1}+1$.
\end{itemize}

\item\label{diam_le_4} If $\gcd(m,q-1)=1$ or $\gcd(n,q-1)=1$, then $\diam(D_q)\le 4$.
If $\gcd(m,q-1) = \gcd(n,q-1) = 1$, then $\diam(D_q) = 3$.

\item \label{main3}  If $p$ does not divide $n$, and $q > (n^2-n+1)^2$,
then $\diam(D(q;1,n)) = 3$.
\item If $D_q$ is strong, then:
\begin{enumerate}
\item[(a)\label{bound_q_le25}]
 If $q > n^2$, then $\diam(D_q) \le 49$.  
\item[(b)\label{bound_q_m4n4}]
If $q > (m-1)^4$, then $\diam(D_q)\le 13$.
\item[(c)]\label{bound_q_le6} If $q > (n-1)^4$, then $\diam(D(q;n,n))\le 9$.
\end{enumerate}
\end{enumerate}
\end{theorem}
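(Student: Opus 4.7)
All of the upper bounds rest on the following reduction. Along a walk $(x_1,x_2) \to (u_1^{(1)}, u_2^{(1)}) \to \cdots \to (u_1^{(k-1)}, u_2^{(k-1)}) \to (y_1,y_2)$ of length $k$, the second coordinates $u_2^{(i)}$ are determined recursively by the arc equations, and an alternating summation of those equations collapses the system to a single polynomial identity in the first coordinates $u_1^{(1)},\dots,u_1^{(k-1)}$. For $k=3$, writing $u=u_1^{(1)}$ and $v=u_1^{(2)}$, this identity is
\begin{equation*}
x_1^m u^n - u^m v^n + v^m y_1^n \;=\; x_2+y_2,
\end{equation*}
a curve of bi-degree $(n,n)$ in $(u,v)$. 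Each upper bound in the theorem then reduces to producing an $\fq$-point on this curve (or its analogue for longer walks) for every admissible choice of endpoints.

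\textbf{Parts (1) and (3).} For (1), take $(x_1,x_2)=(0,0)$ and $(y_1,y_2)=(0,c)$ with $c\ne 0$. The length-$1$ and length-$2$ walk equations both degenerate under $x_1=y_1=0$ and force $c=0$, so strong connectivity supplies a walk of length at least $3$. For (3), when $\gcd(m,q-1)=1$ the map $t\mapsto t^m$ is a bijection on $\fq$; setting $u=0$ in the length-$3$ equation reduces it to $v^m y_1^n=x_2+y_2$, which is solvable whenever $y_1\ne 0$, while the residual cases with $y_1=0$ (and symmetrically $x_1=0$) are covered by a length-$4$ walk routed through a nonzero waypoint. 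The condition $\gcd(n,q-1)=1$ is handled symmetrically by reversing the roles of tail and head, and when both $\gcd$ conditions hold, the length-$3$ construction works uniformly, so (1) forces $\diam(D_q)=3$.

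\textbf{Parts (2), (4), (5).} For the quantitative parts I would use the Weil bound on the absolutely irreducible affine curve
\begin{equation*}
F_{x_1,y_1,c}(u,v) \;=\; x_1^m u^n - u^m v^n + v^m y_1^n - c
\end{equation*}
(and its analogues for walks of length $4,5,\dots$). Under the hypothesis $p\nmid n$ of (4), with $m=1$, a direct factorization or Newton-polygon argument should show $F_{x_1,y_1,c}$ is absolutely irreducible, and the Weil bound yields at least one $\fq$-point when $q>(n^2-n+1)^2$. For (5), walks of length up to $49$, $13$, and $9$ respectively give higher-dimensional varieties of controlled degree, and the thresholds $q>n^2$, $q>(m-1)^4$, and $q>(n-1)^4$ are calibrated to what the Weil bound requires in each regime; the symmetry $m=n$ in (c) simplifies the polynomial and explains the shorter bound. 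For (2), the $O(\sqrt{n+1})$ bound most likely comes from concatenating $\Theta(\sqrt{n+1})$ short walks, each produced by a Weil estimate inside an intermediate subfield of $\fq$; the constants $96$ and $60$ reflect the cost of bridging from the subfield to the full field, and the separate $e=2$ and $e\ge 3$ cases arise because only the latter admits a proper intermediate subfield.

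\textbf{Main obstacle.} The chief technical hurdle is proving absolute irreducibility of $F_{x_1,y_1,c}$ and its longer-walk analogues throughout the stated parameter ranges. Degenerate endpoint choices (e.g.\ $x_1=0$ or $y_1=0$) and special exponent pairs can make the curve reducible or trivial, so the argument will require a careful case split: each bad case must be resolved either by choosing a different intermediate vertex or by invoking an already-proven part of the theorem to cover it with a shorter walk. This bookkeeping—rather than the Weil estimate itself—will dictate the structure and the precise constants of the whole proof.
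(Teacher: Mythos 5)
Your treatment of parts (\ref{gen_lower_bound}) and (\ref{diam_le_4}) is essentially sound: for (\ref{gen_lower_bound}) the paper counts vertices within distance $2$ of $(0,0)$ rather than exhibiting the explicit witness $(0,c)$, but your witness argument is valid (indeed any length-$\le 2$ walk from $(0,0)$ to a vertex with first coordinate $0$ forces second coordinate $0$); for (\ref{diam_le_4}) you are following the paper's route of writing down explicit solutions of the walk equation, though you leave the residual $y_1=0$ cases as an assertion. For part (\ref{main3}) you have the right tool (Hasse--Weil on the curve $ax^n-xy^n+c=0$ after disposing of the $u\neq 0$ case by an explicit solution), but ``a direct factorization or Newton-polygon argument should show'' is a placeholder where the paper actually verifies non-singularity of the projective closure $aX^nZ-XY^n+cZ^{n+1}$ via the partial derivatives, using $p\nmid n$.

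The genuine gap is in parts (\ref{gen_upper_bound}) and (5), where you have not identified the engine of the proof. The paper does not concatenate Weil estimates over subfields, nor does it apply the Weil bound to high-dimensional varieties coming from long walks; it reduces the length-$k$ walk equation to \emph{Waring's problem over $\fq$}. Concretely, taking $k=6\delta+1$ and substituting $x_i=0$ for $i\equiv 1\pmod 3$ and $x_i=1$ for $i\equiv 0\pmod 3$ kills every mixed term $x_i^m x_{i+1}^n$ except those of the form $x_{i}^m\cdot 1$, collapsing the equation to an alternating sum of $2\delta$ $m$-th powers equal to $v-(-1)^kb-u^n$; a short lemma shows such an alternating sum represents every element once one can write every element as $\sum\epsilon_i y_i^m$ with $\delta$ signed terms. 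The constants then come straight from known bounds on Waring's number: Cipra's $\gamma(r,q)\le 16\sqrt{r+1}$ for $e=2$ and $\le 10\sqrt{r+1}$ for $e\ge 3$ give $96\sqrt{n+1}+1$ and $60\sqrt{n+1}+1$; his $\gamma(r,q)\le 8$ for $r<\sqrt q$ gives $6\cdot 8+1=49$; Small's $\gamma(k,q)\le 2$ for $q>(k-1)^4$ gives $6\cdot 2+1=13$ and, with a tighter substitution exploiting $m=n$, the bound $9$. Your proposed alternative for (\ref{gen_upper_bound}) is also internally inconsistent: you attribute the $e=2$ versus $e\ge 3$ dichotomy to the absence of a proper intermediate subfield when $e=2$, but $\mathbb{F}_{p^2}$ contains $\mathbb{F}_p$; the dichotomy is simply the case split in Cipra's theorem. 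Finally, the obstacle you flag --- absolute irreducibility of the longer-walk varieties --- is real for your route and is precisely what the Waring reduction avoids, so as written the quantitative parts of your proposal do not go through.
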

\begin{remark}
The converse to either of the statements in part (\ref{diam_le_4}) of Theorem \ref{main} is not true. Consider, for instance,
$D(9;2,2)$ of diameter $4$, or $D(29;7,12)$ of diameter $3$.
\end{remark}
\begin{remark}
The result of part \ref{bound_q_le25}a can hold for some $q\le m^2$.
\end{remark}
For prime $q$, some of the results of Theorem \ref{main} can be strengthened.

\begin{theorem}
\label{thm_diam_p}
Let $p$  be a prime, $1\le m \le n\le p-1$, and  $D_p= D(p;m,n)$. Then $D_p$ is strong and the following statements hold.
\begin{enumerate}
\item\label{diam_bound_p}
$\diam (D_p) \le 2p-1$ with equality if
and only if
$m=n=p-1$.
\item\label{bound_p_sqrt60}
If $(m,n)\not\in\{((p-1)/2,(p-1)/2),((p-1)/2,p-1), (p-1,p-1)\}$,
 then $\diam(D_p)\le 120\sqrt{m}+1$.
\item\label{bound_p_le10}
If $p > (m-1)^3$,
            then $\diam(D_p) \le 19$.  
\end{enumerate}
\end{theorem}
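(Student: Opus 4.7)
The plan is to reduce each part to a ``walk identity'' obtained by iterating the arc condition: a walk of length $k$ from $(a_1,a_2)$ to $(b_1,b_2)$ in $D_p$ exists if and only if one can choose intermediate first coordinates $y_1,\dots,y_{k-1}\in\mathbb{F}_p$ so that
\[
b_2 - (-1)^k a_2 \;=\; \sum_{i=0}^{k-1}(-1)^{k-1-i}\,z_i^m z_{i+1}^n,\qquad z_0=a_1,\ z_k=b_1,\ z_i=y_i.
\]
Strong connectivity of $D_p$ itself should be invoked from the characterization in~\cite{Kod_Laz_15}, which is automatic for $q=p$ prime and $1\le m\le n\le p-1$.

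Part~(\ref{diam_bound_p}) splits into three sub-claims: the universal upper bound $\diam(D_p)\le 2p-1$, the lower bound $\diam\ge 2p-1$ when $m=n=p-1$, and the strict inequality $\diam\le 2p-2$ when $(m,n)\neq (p-1,p-1)$. For the upper bound I would build walks whose interior first coordinates form a geometric progression in a fixed generator of $\mathbb{F}_p^*$; the walk identity then collapses to an additive problem over $\mathbb{F}_p$ that is solvable in at most $2p-1$ steps by Cauchy--Davenport. For the extremal case $m=n=p-1$, Fermat's little theorem gives $z^{p-1}\in\{0,1\}$ in $\mathbb{F}_p$, so every summand on the right is in $\{0,1\}$ and the whole signed sum lifts to a bounded integer. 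Taking $\mathbf a=(0,0)$ and $\mathbf b=(0,(p-1)/2)$ as a worst-case pair (so that $a_1=b_1=0$ forces the two boundary summands to vanish, killing half the available slack) should show that no walk of length $\le 2p-2$ can reach the target. The strict inequality outside the extremal case ought to drop out of the universal-bound construction: whenever $(m,n)\neq(p-1,p-1)$, at least one of $x\mapsto x^m$, $x\mapsto x^n$ has image strictly larger than $\{0,1\}$, supplying an extra usable summand and shaving at least one step off the Cauchy--Davenport estimate.

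Parts~(\ref{bound_p_sqrt60}) and~(\ref{bound_p_le10}) would follow the templates of Theorem~\ref{main}~(\ref{gen_upper_bound}) and of~(\ref{bound_q_m4n4}) and~(\ref{bound_q_le6}) respectively, but exploiting the sharper Weil-type bounds available over the prime field $\mathbb{F}_p$. The three excluded pairs in part~(\ref{bound_p_sqrt60}) are precisely those for which both $\gcd(m,p-1)$ and $\gcd(n,p-1)$ lie in $\{(p-1)/2,\,p-1\}$, making the images of $x\mapsto x^m$ and $x\mapsto x^n$ too thin to drive the identity; outside this set a character-sum/Weil estimate on the number of intermediate $y_i$ realizing a prescribed right-hand side yields the $O(\sqrt m)$ diameter. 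In part~(\ref{bound_p_le10}), the threshold $p>(m-1)^3$ is where the Weil bound $|N-p|\le(d-1)\sqrt{p}$ on the $\mathbb{F}_p$-point count of the absolutely irreducible plane curve arising from a length-$19$ walk becomes nontrivial, certifying existence of the required solutions between any pair of vertices.

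The principal difficulty lies in the lower bound in part~(\ref{diam_bound_p}): verifying sharpness (not merely the order of magnitude) requires ruling out \emph{every} walk of length $\le 2p-2$ to the bad target, carefully accounting for both parities of $k$ and the way boundary conditions ($a_1=0$ or $b_1=0$) propagate into forced zeros among the indicators $\epsilon_i=z_i^{p-1}z_{i+1}^{p-1}$. A secondary technical point in parts~(\ref{bound_p_sqrt60}) and~(\ref{bound_p_le10}) is certifying absolute irreducibility of the auxiliary curves that arise in the walk identity, since only under that hypothesis does the Weil estimate yield the explicit diameter constants $120\sqrt{m}+1$ and $19$.
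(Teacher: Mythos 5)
There is a genuine gap, and it sits exactly where you flagged the ``principal difficulty'': the sharpness of part~(\ref{diam_bound_p}). First, your proposed worst-case pair is not extremal. In $D(p;p-1,p-1)$ the vertex $(0,(p-1)/2)$ lies at distance $2p-2$ from $(0,0)$, not $2p-1$: the paper computes the BFS layers from $(0,0)$ exactly and finds $N_{4\overline{t}}=\{(0,t)\}$, so $(0,(p-1)/2)$ is the singleton layer $N_{2p-2}$, while the distance-$(2p-1)$ vertices are those of $(*,(p+1)/2)$. Even a completed argument for your pair would certify only $\diam\ge 2p-2$. Second, the quantitative heuristic ``each summand is in $\{0,1\}$, so the signed sum lifts to a bounded integer'' undercounts: with $k-2$ free indicators of alternating sign one can a priori realize any integer in $[-\lceil(k-2)/2\rceil,\lceil(k-2)/2\rceil]$, which only forces $k\gtrsim p$, not $2p-1$. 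What doubles the bound is the coupling $\epsilon_i=1\Leftrightarrow z_i\ne 0$ \emph{and} $z_{i+1}\ne 0$: a maximal run of $r$ consecutive nonzero $z_i$'s contributes $1-1+1-\dotsb$, hence net $0$ or $\pm 1$, and two runs contributing the \emph{same} sign must start at indices of equal parity with a zero between them, so each unit of net progress costs four indices. The paper does not argue this by inequalities at all; it determines every layer $N_j$ explicitly and reads off $\diam=2p-1$. Your upper-bound plan has a parallel problem: decoupling the summands $z_i^mz_{i+1}^n$ (which a geometric progression of first coordinates does not do) and applying Cauchy--Davenport to one $2$-element set per three indices gives only $k\approx 3p$; the paper instead uses period-four $0/1$ patterns contributing exactly $\pm1$ per block ($(p-1)/2$ blocks $=2p-2$ interior indices), and the single leftover residue $\overline{b+v}=(p+1)/2$ genuinely requires the nested case analysis on whether $a$ or $u$ vanishes, since only the boundary terms $a^mx_1^n$ and $x_{2p-2}^mu^n$ can break the $\pm1$-per-four-steps barrier.

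Parts~(\ref{bound_p_sqrt60}) and~(\ref{bound_p_le10}) are also routed differently in the paper, and your version as stated would not compile into a proof. No new curve or character-sum analysis is performed there: both parts reuse the template of Theorem~\ref{main}, parts~(\ref{gen_upper_bound}) and~(\ref{bound_q_m4n4}b), in which all but every third variable is specialized to $0$ or $1$ so that the walk identity collapses to a \emph{diagonal} equation $\pm x_{2}^m\pm x_5^m\pm\dotsb = c$, and then quotes known Waring-type bounds: Cochrane--Pinner's $\delta(m,p)\le 20\sqrt{m}$ when $|\{x^m: x\in\mathbb{F}_p^*\}|>2$ (which is exactly what excluding the three listed pairs guarantees, giving $k=6\delta+1\le 120\sqrt{m}+1$), and Cipra's $\gamma(m,p)\le 3$ for $p>(m-1)^3$ (giving $k=6\cdot 3+1=19$), together with Lemma~\ref{lemma:alt} to handle the alternating signs. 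By contrast, the variety defined by a length-$19$ walk identity is not a plane curve, so the Hasse--Weil bound in the form you invoke does not apply to it directly; if you want to argue via Weil, you must first perform the specialization that reduces matters to a diagonal equation, at which point you are simply reproving the quoted Waring results.
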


The paper is organized as follows. In section \ref{preres} we present all results which are needed for our proofs of Theorems \ref{main} and \ref{thm_diam_p} in sections \ref{proofs1} and \ref{proofs2}, respectively.  Section \ref{open} contains concluding remarks and open problems.

\section{Preliminary results.}\label{preres}
We begin with a general result that gives necessary and sufficient conditions for a digraph $D(q;m,n)$ to be strong.


\begin{theorem} {\rm [\cite{Kod_Laz_15}, Theorem 2]}
\label{thm_conn}
$D(q;m,n)$ is strong if and only if $\gcd(q-1,m,n)$ is not divisible by any
$q_d = (q-1)/(p^{d}-1)$ for any positive divisor $d$ of $e$, $d < e$.
In particular, $D(p;m,n)$ is strong for any $m,n$.
\end{theorem}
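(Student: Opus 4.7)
The plan is to pin down the set $S := \{c \in \fq : (0,0) \text{ has a walk to } (0,c) \text{ in } D(q;m,n)\}$, identify it with a subfield of $\fq$, and then use $S = \fq$ as the strong-connectivity criterion. The crucial starting observation is that $(0,0)$ carries a loop, since $0 + 0 = 0^m \cdot 0^n$ whenever $m,n \ge 1$, so we may freely alter the parity of any walk that starts or ends at $(0,0)$.

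I would first unroll the arc relation $c_{i+1} = a_i^m a_{i+1}^n - c_i$ along a walk from $(0,0)$ with first-coordinate sequence $0 = a_0, a_1, \ldots, a_k$ to obtain the endpoint second coordinate
\[
c_k \;=\; \sum_{j=0}^{k-1} (-1)^{k-1-j}\, a_j^m a_{j+1}^n .
\]
Imposing $a_k = 0$ shows every $c \in S$ is a signed sum of elements of $T := \{a^m b^n : a, b \in \fq^*\}$, hence lies in the $\mathbb{F}_p$-span $U$ of $T$ inside $\fq$. For the reverse inclusion I would verify that $S$ is an additive subgroup of $\fq$: given walks $W_1, W_2$ from $(0,0)$ ending at $(0,c_1), (0,c_2)$, appending to $W_1$ a walk that starts at $(0,c_1)$ and reuses the first-coordinate sequence of $W_2$ lands at $(0, (-1)^{|W_2|} c_1 + c_2)$, and the loop at $(0,0)$ lets me choose the parity of $|W_2|$ at will. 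Length-$3$ walks $0 \to a \to b \to 0$ produce $-a^m b^n \in S$, so $T \subseteq S$ by closure under negation, and hence $U \subseteq S$.

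Next I would identify $U$. Because $T$ is closed under multiplication and contains $1$, the $\mathbb{F}_p$-span $U$ is a finite subring of $\fq$, hence a subfield $\mathbb{F}_{p^f}$ with $f \mid e$. Since $\fq^*$ is cyclic of order $q-1$, one has $T = \{x^{d_0} : x \in \fq^*\}$ for $d_0 = \gcd(m,n,q-1)$, and the inclusion $T \subseteq \mathbb{F}_{p^f}^*$ is equivalent to $q_f \mid d_0$. Consequently $f$ is the smallest divisor of $e$ satisfying $q_f \mid d_0$, and $S = \mathbb{F}_{p^f}$.

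To convert reachability of $(0,c)$ into strong connectivity: for every $x \in \fq$ the arc $(0,c) \to (x,-c)$ exists, since $0 + (-c) = 0^m x^n$, so $S = \fq$ is equivalent to $(0,0)$ reaching every vertex. Running the whole argument on the reverse digraph $D(q;n,m)$ — whose criterion is identical by the symmetry of $\gcd$ in $m,n$ — yields that every vertex reaches $(0,0)$, and a walk through $(0,0)$ then handles arbitrary ordered pairs. Since $q_e = 1$ always divides $d_0$, the condition $f = e$ becomes ``no proper divisor $d < e$ of $e$ has $q_d \mid d_0$'', which is exactly the stated criterion; the ``in particular'' clause follows from there being no such $d$ when $e = 1$. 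The main technical obstacle is establishing $S = \mathbb{F}_{p^f}$: the containment $S \subseteq U$ is immediate from the endpoint formula, but $U \subseteq S$ needs the concatenation-plus-loop argument for the subgroup property together with the subring-to-subfield step and the numerical translation $T \subseteq \mathbb{F}_{p^f}^* \iff q_f \mid d_0$.
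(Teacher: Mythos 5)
This statement is quoted in the paper as Theorem 2 of \cite{Kod_Laz_15} and is not proved here, so there is no in-paper argument to compare against; judged on its own, your proof is correct and complete. Every step checks out: the endpoint formula $c_k=\sum_{j=0}^{k-1}(-1)^{k-1-j}a_j^m a_{j+1}^n$ is right; the loop at $(0,0)$ does let you flip the parity of a concatenated walk without changing its contribution, so $S$ is an additive subgroup; the length-$3$ walks give $T\subseteq S$, and the reverse containment $S\subseteq U$ follows from the formula; $U$ is a finite subring with $1$, hence a subfield $\mathbb{F}_{p^f}$; the identification $T=\{x^{d_0}\colon x\in\fq^*\}$ with $d_0=\gcd(m,n,q-1)$ and the equivalence $T\subseteq\mathbb{F}_{p^f}^*\iff q_f\mid d_0$ are correct cyclic-group computations; and the passage from ``$(0,0)$ reaches everything'' to strong connectivity via the reversed digraph $D(q;n,m)$ (whose criterion is symmetric in $m$ and $n$) closes both directions of the equivalence. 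This is essentially the natural argument for the cited result, and it is a genuinely useful reconstruction given that the present paper only imports the statement.
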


Every walk of length $k$ in $D = D(q; m,n)$ originating at $(a,{b})$ is of the form
\begin{align}
(a, b)    &\to (x_1,- b + a^m x_1^n)\nonumber\\
          &\to (x_2, b - a^m x_1^n + x_1^m x_2^n)\nonumber\\
          &\to \cdots \nonumber\\
          &\to(x_k, x_{k-1}^m x_k^n- x_{k-2}^m x_{k-1}^n+\cdots +(-1)^{k-1} a^m x_1^n+(-1)^k b)\nonumber.
\end{align}

Therefore, in order to prove that $\diam(D)\le k$, one can  show that  for any choice of $a,b,u,v\in\fq$, there  exists $(x_1,\dotso,x_k)\in\fq^k$ so that
\begin{equation}
\label{eqn:walk_length_k}
(u,v) = (x_k, x_{k-1}^m x_k^n- \cdots +(-1)^{k-1} a^m x_1^n+(-1)^k b).
\end{equation}
In order to show that $\diam(D)\ge  l$, one can show that there exist $a,b,u,v\in~\fq$ such that
(\ref{eqn:walk_length_k}) has no solution in $\fq^k$ for any $k < l$.
\bigskip


\subsection{
Waring's Problem
}
In order to obtain an upper  bound on $\diam(D(q; m,n))$ we will use some results concerning Waring's problem over finite fields.

Waring's number $\gamma(r,q)$ over $\fq$ is defined as the smallest positive integer $s$ (should it exist) such that the equation
\[
x_1^r + x_2^r + \dotsb + x_s^r = a
\]
has a solution $(x_1,\dotso,x_s)\in\fq^s$ for any $a\in\fq$.
Similarly, $\delta(r,q)$ is defined as the smallest positive integer $s$ (should it exist) such that
for any $a\in\fq$, there exists $(\epsilon_1,\dotso,\epsilon_s)$,
each $\epsilon_i\in\{-1,1\}\subseteq\mathbb{F}_q$,
for which the equation
\[
\epsilon_1 x_1^r + \epsilon_2 x_2^r + \dotsb + \epsilon_s x_s^r = a
\]
 has a solution $(x_1,\dotso,x_s)\in\fq^s$.
It is easy to argue that  $\delta(r,q)$ exists if and only if
$\gamma(r,q)$ exists, and in this case $\delta(r,q)\le \gamma(r,q)$.

A criterion on the existence of $\gamma(r,q)$ is the following theorem  by Bhashkaran \cite{Bhashkaran_1966}.

\begin{theorem} {\rm [\cite{Bhashkaran_1966}, Theorem G]}
\label{thm:waring_exist}
Waring's number $\gamma(r,q)$ exists if and only if $r$ is not divisible by any $q_d
 = (q-1)/(p^{d}-1)$ for any positive divisor $d$ of $e$, $d < e$.
\end{theorem}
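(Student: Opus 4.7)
The plan is to reduce the existence of $\gamma(r,q)$ to a purely multiplicative question about the subgroup of $r$-th powers, and then identify exactly which subfields of $\fq$ can obstruct it.

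First, observe that $\gamma(r,q)$ exists if and only if the additive subgroup of $\fq$ generated by the set of $r$-th powers $A = \{x^r : x \in \fq\}$ equals all of $\fq$. Call this additive span $K$. Let $H = A \setminus \{0\}$, which is the image of the $r$-th power homomorphism on $\fq^*$; hence $H$ is a subgroup of $\fq^*$ of order $s := (q-1)/\gcd(r,q-1)$.

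The central structural step is to show that $K$ is in fact a subfield of $\fq$. It is obviously an $\mathbb{F}_p$-subspace containing $1 = 1^r$. For multiplicative closure, note that if $h \in H$ and $x \in \fq^*$, then $h \cdot x^r = (h^{1/r} x)^r \in H$, because $h$ itself is an $r$-th power in $H$; consequently $H \cdot K \subseteq K$. Writing an arbitrary $k_1 \in K$ as an $\mathbb{F}_p$-linear combination of elements of $H$ and distributing, one gets $k_1 k_2 \in K$ for all $k_1, k_2 \in K$. So $K$ is a subring of $\fq$ containing $1$, and therefore a subfield $\mathbb{F}_{p^d}$ for some divisor $d$ of $e$; by construction it is the smallest subfield containing $H$.

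Next I translate the containment $H \subseteq \mathbb{F}_{p^d}^*$ into an arithmetic condition on $r$. Since $\mathbb{F}_{p^d}^*$ is the unique subgroup of the cyclic group $\fq^*$ of order $p^d - 1$, we have $H \subseteq \mathbb{F}_{p^d}^*$ if and only if $s \mid p^d - 1$. Rewriting with $q_d = (q-1)/(p^d-1)$, the divisibility $s \mid p^d-1$ is equivalent to $q_d \mid \gcd(r,q-1)$, which (since $q_d \mid q-1$ automatically) is equivalent to $q_d \mid r$.

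Combining: $\gamma(r,q)$ exists iff $K = \fq$ iff $H$ is not contained in any proper subfield $\mathbb{F}_{p^d}$ of $\fq$ iff $q_d \nmid r$ for every positive divisor $d$ of $e$ with $d < e$. This is the claimed equivalence. The step I expect to be the main obstacle, or at least the one most needing care, is the multiplicative closure of $K$: it is easy to verify that $K$ is closed under multiplication by $H$, but to upgrade this to closure under multiplication by $K$ one must use that $K$ is the $\mathbb{F}_p$-span of $H$ and that $1 \in H$, and distribute. Everything else is bookkeeping in the cyclic group $\fq^*$.
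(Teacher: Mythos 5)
The paper does not prove this statement at all: it is quoted verbatim as Theorem G of Bhaskaran's 1966 paper, so there is no internal proof to compare against. Judged on its own, your argument is correct and is essentially the classical proof of this criterion (going back to Tornheim and Bhaskaran): the set of sums of $r$-th powers is the $\mathbb{F}_p$-span $K$ of $A=\{x^r\}$, $K$ is a subfield because $A\cdot A\subseteq A$ lets you distribute multiplication over the spanning set, and $K=\fq$ precisely when the group $H$ of nonzero $r$-th powers, of order $(q-1)/\gcd(r,q-1)$, lies in no proper subfield's multiplicative group; the divisibility bookkeeping translating $H\subseteq\mathbb{F}_{p^d}^*$ into $q_d\mid r$ is right. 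Two small points deserve a line each rather than the word ``observe'': first, that the set of sums of $r$-th powers coincides with the additive \emph{group} generated by $A$ uses finiteness (a nonempty subset of a finite abelian group closed under addition is a subgroup, since $0$ and inverses arise as repeated sums), and that existence of a uniform $s$ then follows by padding with $0^r$; second, $h^{1/r}$ should be replaced by ``write $h=y^r$.'' Neither affects the validity of the proof.
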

The study of various bounds on $\gamma(r,q)$ has drawn considerable attention. We will use the following two upper bounds on Waring's number due to J.~Cipra \cite{Cipra_2009}.
\begin{theorem}{\rm [\cite{Cipra_2009}, Theorem 4]}
\label{thm:waring_bound}
If $e = 2$ and $\gamma(r,q)$ exists,
then $\gamma(r,q)\le 16\sqrt{r+1}$. Also, if
$e \ge 3$ and $\gamma(r,q)$ exists,
then $\gamma(r,q)\le 10\sqrt{r+1}$.
\end{theorem}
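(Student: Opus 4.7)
The plan is to prove Cipra's bound by reducing to a problem about a multiplicative subgroup and then bounding its additive growth via character sums, using the subfield structure guaranteed by $e\ge 2$.

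First, I would reduce to the case $r \mid q-1$. Since the set of $r$-th powers in $\fq^*$ depends only on $r' = \gcd(r, q-1)$, we have $\gamma(r,q) = \gamma(r',q)$, and by Theorem~\ref{thm:waring_exist} the existence hypothesis forces $r'$ not to be divisible by any forbidden $q_d$. Set $t = r'$ and let $H \subseteq \fq^*$ denote the subgroup of nonzero $t$-th powers; it has order $(q-1)/t$. Writing $\gamma = \gamma(t,q)$, the task is to show $\gamma \le 16\sqrt{t+1}$ when $e=2$ and $\gamma \le 10\sqrt{t+1}$ when $e\ge 3$. The natural dichotomy is between the case when $H$ contains a nontrivial subfield $\mathbb{F}_{p^d}^*$ and the case when it does not.

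In the first case, if $H \supseteq \mathbb{F}_{p^d}^*$ for some $d \mid e$ with $d < e$, then every element of $\fq$ can be written as an $\mathbb{F}_{p^d}$-linear combination of a basis of $\fq / \mathbb{F}_{p^d}$, and each basis vector can be absorbed into $H$. This reduces Waring's problem for $\fq$ to Waring's problem in the subfield, where the relevant exponent is bounded and $\gamma(t, p^d)$ can be controlled directly (for instance by the Chevalley--Warning method or classical Heilbronn-type estimates), yielding a bound independent of $q$. In the second case, $|H|$ is comparatively small, which is exactly the regime where additive combinatorics gives strong growth: using the Weil bound on multiplicative character sums, one shows that for the Gauss-type sum
\[
\sum_{x\in\fq^*}\chi(x)\psi(x),
\]
where $\chi$ has order $t$ and $\psi$ is a nontrivial additive character, the magnitude is at most $\sqrt{q}$, and from this an inclusion--exclusion / orthogonality calculation bounds the number of representations of each $a\in\fq$ as a sum of $s$ elements of $H\cup\{0\}$. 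The upshot is that $sH = \fq$ as soon as $s|H|^s$ dominates $q^{s-1}\sqrt{q}$, which, optimized in $s$, gives $s = O(\sqrt{t+1})$.

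To obtain the explicit constants, I would iterate the character sum argument in stages: first inflate $H$ to a set $H_1 = H + H$ (or $H - H$) whose size is controlled from below using the Plünnecke--Ruzsa or Weil-based machinery, then repeat until the partial sumset fills $\fq$. For $e\ge 3$ the extra flexibility of having intermediate subfields $\mathbb{F}_p\subset \mathbb{F}_{p^2}\subset \cdots$ shortens the iteration by roughly a factor of $16/10$, which accounts for the discrepancy in the stated constants between the $e=2$ and $e\ge 3$ regimes.

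The main obstacle is the second case, where $H$ contains no proper subfield: the cheap counting argument only yields $s = O(\sqrt{q}/|H|)$-type bounds, which can be far worse than $\sqrt{t+1}$. Carrying out the iteration carefully, and tracking the correct dependence on $t$ rather than on $|H|$ or $q$, is the delicate heart of the argument; this is where Cipra's refinements of earlier bounds of Weil, Heilbronn, and Glibichuk--Konyagin are essential, and is where I would expect my own attempt to require the most work before converging on the constants $16$ and $10$.
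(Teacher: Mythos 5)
First, a point of comparison: the paper does not prove this statement at all. Theorem~\ref{thm:waring_bound} is quoted from Cipra [\cite{Cipra_2009}, Theorem 4] and used as a black box in the proof of Theorem~\ref{main}, part (\ref{gen_upper_bound}). So there is no internal proof to measure your attempt against; your proposal has to stand on its own as a proof of Cipra's theorem, and as such it has a genuine gap.

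The gap sits exactly where you place it yourself, at the end. The reduction to $t=\gcd(r,q-1)$ and to the multiplicative subgroup $H$ of $t$-th powers is fine, and the tools you name (Gauss sums, Glibichuk--Konyagin, subfield structure) are the right circle of ideas. But the central quantitative claim --- that orthogonality shows $sH=\fq$ once $s|H|^s$ dominates $q^{s-1}\sqrt q$, and that optimizing in $s$ gives $s=O(\sqrt{t+1})$ --- is not correct. The standard count gives, for the number $N(a)$ of representations of $a$ as a sum of $s$ elements of $H$, the lower bound $N(a)\ge |H|^s/q - q^{s/2}$, since each incomplete character sum over $H$ has modulus below $\sqrt q$; this is positive only when $|H|>q^{1/2+1/s}$, i.e.\ only when $t$ is below roughly $\sqrt q$, and in that regime it yields $s=O(1)$ (essentially Corollary~\ref{thm:diam_le_8}), not anything of shape $\sqrt{t+1}$. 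In the complementary regime $|H|\le\sqrt q$ the main term never beats the error term for any $s$, so the argument as described produces nothing --- and that is precisely the regime in which the bounds $16\sqrt{r+1}$ and $10\sqrt{r+1}$ have content. Cipra's actual argument does something different there: the existence criterion (Theorem~\ref{thm:waring_exist}) guarantees the $t$-th powers are not confined to a proper subfield, one builds a basis of $\fq$ over a subfield out of short sums of $t$-th powers, and one imports a $\sqrt{\cdot}$-type bound for Waring's number over the prime field. Your ``first case'' ($H\supseteq\mathbb{F}_{p^d}^*$) is not the dichotomy the hypotheses force, the claim that basis vectors ``can be absorbed into $H$'' is unsupported, and deriving the constants $16$ and $10$ from ``a factor of $16/10$ in the iteration'' has no content. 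The step that must produce the $\sqrt{r+1}$ dependence is exactly the step that is missing.
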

\begin{cor} {\rm [\cite{Cipra_2009}, Corollary 7]}
\label{thm:diam_le_8}
If $\gamma(r,q)$ exists and $r < \sqrt{q}$, then $\gamma(r,q)\le 8$.
\end{cor}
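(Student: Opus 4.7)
The plan is to prove $\gamma(r,q)\le 8$ via Fourier analysis on $\fq$ combined with the Weil bound for exponential sums. First I would reduce to the case $r\mid q-1$, using that $\gamma(r,q)=\gamma(\gcd(r,q-1),q)$, so that $R=\{x^r : x\in\fq\}$ has cardinality $(q-1)/r+1>\sqrt{q}$, and $R^{*}:=R\setminus\{0\}$ is a multiplicative subgroup of $\fq^{*}$ of index $r$.

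Next, fix a non-trivial additive character $\psi$ of $\fq$ and set $S(t)=\sum_{x\in\fq}\psi(tx^r)$. For $a\in\fq$, the number of $k$-tuples $(x_1,\dotsc,x_k)\in\fq^k$ with $\sum_{i=1}^{k} x_i^r=a$ is
\[
N_k(a)=\frac{1}{q}\sum_{t\in\fq}S(t)^k\psi(-at)=q^{k-1}+\frac{1}{q}\sum_{t\ne 0}S(t)^k\psi(-at),
\]
so the task is to show that for $k=8$ the error term is strictly smaller than the main term $q^{k-1}$. The Weil bound gives $|S(t)|\le (r-1)\sqrt{q}$ for $t\ne 0$. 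To sharpen this into an estimate usable under the near-critical hypothesis $r<\sqrt{q}$, I would combine it with the Parseval-type identity
\[
\sum_{t\in\fq}|S(t)|^2 \;=\; q\cdot\#\{(x,y)\in\fq^2:x^r=y^r\},
\]
together with H\"older-type interpolations such as $\sum_{t\ne 0}|S(t)|^8\le (\max_{t\ne 0}|S(t)|)^{6}\sum_{t\ne 0}|S(t)|^2$, which replace the crude factor $(q-1)$ coming from counting frequencies with a genuinely smaller $L^2$ mass.

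The main obstacle is driving the number of summands all the way down to the \emph{constant} $8$ under a hypothesis essentially matching the Weil threshold: a direct Weil estimate alone forces $r<q^{1/2-1/k}$, strictly stronger than $r<\sqrt{q}$ for any fixed $k$. Bridging this gap requires additional input from the multiplicative subgroup structure of $R^{*}$, either through Gauss-sum bounds for $\sum_{y\in R^{*}}\psi(ty)$ obtained by orthogonality over the multiplicative characters $\chi$ with $\chi^r=1$, or through sumset estimates for multiplicative subgroups of $\fq^{*}$ in the spirit of Heath-Brown--Konyagin. Once such a refined bound is in hand, taking $k=8$ should yield $N_8(a)>0$ for every $a\in\fq$, establishing $\gamma(r,q)\le 8$ as claimed.
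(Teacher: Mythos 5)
This corollary is imported by the paper directly from \cite{Cipra_2009} (Corollary 7) and is not proved in the text, so your argument has to stand on its own --- and it does not close. The setup is fine: with $S(t)=\sum_{x\in\fq}\psi(tx^r)$ the count of representations is $N_k(a)=q^{k-1}+q^{-1}\sum_{t\ne 0}S(t)^k\psi(-at)$, and (for $r\mid q-1$) Parseval gives $\sum_{t\ne 0}|S(t)|^2=q(q-1)(r-1)$. But if you now run your proposed interpolation $\sum_{t\ne0}|S(t)|^8\le(\max_{t\ne0}|S(t)|)^6\sum_{t\ne0}|S(t)|^2$ with the Weil bound $|S(t)|\le(r-1)\sqrt{q}$, the error term is at most $(r-1)^7q^3(q-1)$, and positivity of $N_8(a)$ requires roughly $(r-1)^7<q^3$, i.e.\ $r\lesssim q^{3/7}$. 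So the machinery you actually deploy provably stops short of the hypothesis $r<\sqrt{q}$, exactly as you suspect in your own ``main obstacle'' paragraph.

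The real gap is that the sentence ``bridging this gap requires additional input \ldots in the spirit of Heath-Brown--Konyagin,'' followed by ``once such a refined bound is in hand,'' is not a proof step: it is a placeholder for the entire content of the theorem. The statement that a multiplicative subgroup $G\le\fq^*$ with $|G|>\sqrt{q}$ satisfies $G+\dotsb+G=\fq$ with an \emph{absolute} number (eight) of summands is a genuine sum--product theorem (Glibichuk's result on $8AB=\fq$ when $|A||B|>q$, which is what Cipra's Corollary 7 actually rests on), and it is combinatorial in nature rather than a consequence of sharpened character-sum estimates. Indeed, one should not expect any $L^\infty$--$L^2$ refinement of Weil to work at this threshold: for $q=p^2$ the subgroup $\mathbb{F}_p^*\subset\mathbb{F}_{p^2}^*$ has size about $\sqrt{q}$ and its additive character sums can be as large as $|G|$ itself, so individual Gauss sums over subgroups of this size need not exhibit any cancellation. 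To make your write-up a proof you would have to either quote Glibichuk's theorem explicitly and verify its hypotheses for $R^*$ (which reduces the corollary to a citation, matching what the paper does), or reproduce its argument; as written, the decisive step is missing.
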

For the case $q = p$, the following bound will be of interest.
\begin{theorem}{\rm [Cochrane, Pinner \cite{Cochrane_Pinner_2008}, Corollary 10.3]}
\label{thm:Cochrane_Pinner}
If $|\{x^k\colon x\in\mathbb{F}_p^\ast\}|>2$, then $\delta(k,p)\le 20\sqrt{k}$.
\end{theorem}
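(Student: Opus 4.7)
The plan is to split on the size of $k$ relative to $\sqrt{p}$, combining a Fourier/Weil covering argument in the easy regime with a sumset-growth estimate for multiplicative subgroups in the hard regime. Since $\delta(k, p)$ depends only on the multiplicative subgroup $H := \{x^k \colon x \in \mathbb{F}_p^\ast\}$, and the hypothesis $|H| \ge 3$ holds, I may first replace $k$ by $d := \gcd(k, p-1)$, reducing to the case $k \mid p-1$ with $|H| = (p-1)/k \ge 3$, and then aim to prove $\delta(k, p) \le 20\sqrt{k}$.

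In the easy regime $k \le p^{1/2 - \epsilon}$, I would express the number $N_s(a)$ of signed representations $a = \epsilon_1 y_1^k + \dotsb + \epsilon_s y_s^k$ (with $\epsilon_i \in \{\pm 1\}$, $y_i \in \mathbb{F}_p$) via orthogonality over the additive characters of $\mathbb{F}_p$, and apply the Weil bound
\begin{equation*}
\Bigl| \sum_{y \in \mathbb{F}_p} e^{2\pi i t y^k / p} \Bigr| \le (k-1)\sqrt{p} \qquad (t \ne 0).
\end{equation*}
The main term $(2p)^s/p$ dominates the total error $(p-1)\bigl(2(k-1)\sqrt{p}\bigr)^s/p$ once $(\sqrt{p}/(k-1))^s > p$, which holds for $s = O(1/\epsilon)$, well inside $20\sqrt{k}$.

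The hard regime is $k \ge c\sqrt{p}$, where $|H|$ is small and the Weil error swamps the main term. Here the plan is to grow the signed sumset $s(\pm H) := \{\epsilon_1 h_1 + \dotsb + \epsilon_s h_s \colon \epsilon_i \in \{\pm 1\},\ h_i \in H\}$ combinatorially, via two ingredients: (i) a sum-product/sumset estimate for multiplicative subgroups of Heath-Brown--Konyagin or Bourgain--Glibichuk--Konyagin type, giving $|H + H| \ge |H|^{1 + \eta}$ for an absolute $\eta > 0$ whenever $3 \le |H| < \sqrt{p}$, and (ii) the Pl\"unnecke--Ruzsa inequality, upgrading this to geometric growth of $|s(\pm H)|$ in $s$. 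Iterating until $|s(\pm H)| \ge \sqrt{p}$ requires $s \le C\sqrt{k}$ signed copies; for very small $|H|$ (e.g.\ $|H| = 3$, where $H$ consists of cube roots of unity) the count is limited instead by the rank argument $(2s+1)^{|H|-1} \ge p$, which also matches $s \sim \sqrt{p} \sim \sqrt{k}$ up to constants.

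Once some signed sumset $B = s_1(\pm H)$ exceeds $\sqrt{p}$, a final Fourier argument applied to the convolution of $B$ with $O(1)$ more signed copies of $H$ finishes the covering of $\mathbb{F}_p$. The main obstacle is matching the explicit constant $20$: both the growth exponent $\eta$ in the subgroup sumset estimate and the Pl\"unnecke--Ruzsa loss must be tracked carefully, and the transition threshold between the two regimes chosen so that the two phase costs sum to at most $20\sqrt{k}$. In particular, for the smallest allowed $|H|$, one cannot use asymptotic sum-product estimates and must fall back on the explicit lattice/rank argument for $\mathbb{Z}[\zeta_{|H|}]$, which is what ultimately pins down the constant.
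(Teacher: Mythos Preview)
The paper does not prove this statement at all: Theorem~\ref{thm:Cochrane_Pinner} is simply quoted from Cochrane and Pinner \cite{Cochrane_Pinner_2008} (their Corollary~10.3) and used as a black box in the proof of Theorem~\ref{thm_diam_p}, part~(\ref{bound_p_sqrt60}). There is therefore nothing in the present paper to compare your proposal against.

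As a sketch of the actual Cochrane--Pinner argument, your outline is in the right spirit. Their proof does reduce to $d=\gcd(k,p-1)$, does split on the size of the subgroup $H$ of $k$th powers, and does combine Weil-type exponential sum bounds with sum-product growth estimates for small multiplicative subgroups (in the lineage of Heath-Brown--Konyagin and Bourgain--Glibichuk--Konyagin). Where your proposal remains only a plan is precisely where you flag it: the explicit constant~$20$ comes from specific numerical inequalities in \cite{Cochrane_Pinner_2008} (their Theorems~1.2 and~1.3 on exponential sums over subgroups, feeding into Theorem~10.1), and you have not carried out that bookkeeping. Your Pl\"unnecke--Ruzsa iteration and the ``rank argument for $\mathbb{Z}[\zeta_{|H|}]$'' are plausible heuristics but are not how Cochrane and Pinner actually organise the small-$|H|$ case; they work directly with bounds on $\max_{t\ne 0}|\sum_{h\in H} e_p(th)|$ rather than with sumset cardinalities. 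So your proposal is a reasonable high-level roadmap, but it is not a proof, and it diverges in mechanism from the cited source in the hard regime.
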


The next two statements concerning very strong bounds on Waring's number in large fields follow from the work of Weil \cite{Weil}, and Hua and Vandiver \cite{Hua_Vandiver}.
\begin{theorem}{\rm [Small \cite{Small_1977}]}
\label{thm:waring_Small_estimates}
If $q > (k-1)^4$, then $\gamma(k,q) \le 2$.
\end{theorem}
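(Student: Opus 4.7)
The plan is to count the number $N(a)=\#\{(x,y)\in\fq^2:x^k+y^k=a\}$ of representations and show $N(a)\ge 1$ for every $a\in\fq$. First I would reduce to the case $k\mid q-1$: letting $d=\gcd(k,q-1)$, the image of $x\mapsto x^k$ on $\fq$ coincides with the image of $x\mapsto x^d$ (since $\fq^*$ is cyclic), so representability as a sum of two $k$-th powers is equivalent to representability as a sum of two $d$-th powers. Because $d\le k$, the hypothesis $q>(k-1)^4$ implies $q>(d-1)^4$, and it suffices to work with exponent $d$.

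For $a=0$ take $(x,y)=(0,0)$, so assume $a\ne 0$. Let $\chi$ be a multiplicative character of $\fq^*$ of exact order $d$, extended to $\fq$ by $\chi^j(0)=0$. Using the identity $\#\{x\in\fq:x^d=c\}=\sum_{j=0}^{d-1}\chi^j(c)+[c=0]$, I would expand
\[
N(a)=\sum_{c_1+c_2=a}\#\{x:x^d=c_1\}\cdot\#\{y:y^d=c_2\}
\]
and, via the substitution $c_1=at$, $c_2=a(1-t)$, check that the contributions arising from $c_1=0$, $c_2=0$, or a zero character index cancel exactly, yielding the clean formula
\[
N(a)=q+\sum_{1\le j,j'\le d-1}\chi^{j+j'}(a)\,J(\chi^j,\chi^{j'}),
\]
where $J(\cdot,\cdot)$ denotes the Jacobi sum. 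The classical estimate gives $|J(\chi^j,\chi^{j'})|=\sqrt{q}$ when $\chi^{j+j'}$ is nontrivial and $|J(\chi^j,\chi^{-j})|=1$ otherwise. Among the $(d-1)^2$ pairs there are $d-1$ of the second type and $(d-1)(d-2)$ of the first, so
\[
|N(a)-q|\le (d-1)(d-2)\sqrt{q}+(d-1)\le (d-1)^2\sqrt{q}.
\]
Since $(d-1)^2\le(k-1)^2<\sqrt{q}$ by the hypothesis $q>(k-1)^4$, the right-hand side is strictly less than $q$, so $N(a)\ge 1$ and $\gamma(k,q)\le 2$.

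The main obstacle is the bookkeeping that produces the clean identity for $N(a)-q$: one must track the terms where $c_1=0$, $c_2=0$, or a character index is trivial, and verify that these boundary contributions cancel exactly rather than merely being absorbed by the Jacobi-sum bound. A more geometric alternative that sidesteps this algebra is to apply Weil's theorem directly to the smooth projective Fermat-type curve $X^d+Y^d=aZ^d$, of genus $(d-1)(d-2)/2$, which gives $|N_{\mathrm{proj}}(a)-(q+1)|\le(d-1)(d-2)\sqrt{q}$; subtracting the at most $d$ points at infinity then recovers the affine count with essentially the same final inequality.
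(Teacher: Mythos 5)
The paper states this theorem without proof, citing Small and remarking only that it follows from the work of Weil and of Hua--Vandiver; your Jacobi-sum argument is precisely that standard derivation and is correct. The reduction to $d=\gcd(k,q-1)$, the exact cancellation of the boundary terms yielding $N(a)=q+\sum_{1\le j,j'\le d-1}\chi^{j+j'}(a)J(\chi^j,\chi^{j'})$, the count of $(d-1)(d-2)$ pairs with $|J|=\sqrt{q}$ versus $d-1$ pairs with $|J|=1$, and the final inequality $(d-1)^2\le(k-1)^2<\sqrt{q}$ all check out (as does the alternative via the Fermat curve, which is smooth since $d\mid q-1$ forces $p\nmid d$).
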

\begin{theorem} {\rm [Cipra \cite{Cipra_thesis}, p.~4]}
\label{thm:waring_small_estimates}
If $ p > (k-1)^3$, then $\gamma(k,p)\le 3$.
\end{theorem}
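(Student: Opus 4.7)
The plan is to adapt the character-sum (Hua--Vandiver) approach and show that under $p > (k-1)^3$ every $a \in \mathbb{F}_p$ is a sum of three $k$-th powers. First I would reduce to the case $k \mid p-1$: for $d = \gcd(k, p-1)$ the image of $x \mapsto x^k$ equals the image of $x \mapsto x^d$, so $\gamma(k,p) = \gamma(d,p)$, and since $d \le k$ we still have $p > (d-1)^3$. Thus it suffices to assume $k \mid p-1$ and $k \ge 2$; the case $k = 1$ is trivial.

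For $a \in \mathbb{F}_p$ let $N_3(a) = \#\{(x_1,x_2,x_3) \in \mathbb{F}_p^3 : x_1^k + x_2^k + x_3^k = a\}$; the goal is $N_3(a) \ge 1$. Additive orthogonality yields
\begin{equation*}
N_3(a) = \frac{1}{p} \sum_{t \in \mathbb{F}_p} e_p(-ta)\,S(t)^3, \qquad S(t) := \sum_{x \in \mathbb{F}_p} e_p(tx^k),
\end{equation*}
where $e_p(y) = \exp(2\pi i y/p)$; the $t = 0$ term contributes the main term $p^2$. For $t \ne 0$ I would invoke the standard expansion
\begin{equation*}
S(t) = \sum_{\chi \ne \chi_0,\; \chi^k = \chi_0} g(\chi)\,\bar\chi(t),
\end{equation*}
with $g(\chi) = \sum_{y}\chi(y)\,e_p(y)$ the Gauss sum of modulus $\sqrt p$. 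Cubing $S(t)$ and exchanging summations,
\begin{equation*}
N_3(a) - p^2 = \frac{1}{p}\sum_{\chi_1,\chi_2,\chi_3} g(\chi_1)\,g(\chi_2)\,g(\chi_3)\,T(a,\chi_1\chi_2\chi_3),
\end{equation*}
where $T(a,\chi) = \sum_{t \ne 0} e_p(-ta)\,\bar\chi(t)$ has absolute value $1$ when $\chi$ is trivial and $\sqrt p$ otherwise (for $a \ne 0$).

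The final step is a careful count of character triples. Among the $(k-1)^3$ triples of nontrivial characters $\chi_i$ with $\chi_i^k = \chi_0$, exactly $(k-1)(k-2)$ have trivial product (for each of the $(k-1)^2$ choices of $(\chi_1,\chi_2)$, $\chi_3 = (\chi_1\chi_2)^{-1}$ is nontrivial iff $\chi_2 \ne \chi_1^{-1}$). Using $|g(\chi_i)| = \sqrt p$ and the triangle inequality,
\begin{equation*}
|N_3(a) - p^2| \le (k-1)(k-2)\sqrt p + \bigl((k-1)^3 - (k-1)(k-2)\bigr)\,p \le (k-1)^3\,p,
\end{equation*}
where the last step uses $\sqrt p \le p$. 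The hypothesis $p > (k-1)^3$ then gives $N_3(a) \ge p(p-(k-1)^3) > 0$ for $a \ne 0$; the case $a = 0$ is handled by the trivial decomposition $0 = 0^k + 0^k + 0^k$. The main obstacle is precisely this final counting: a crude $|S(t)| \le (k-1)\sqrt p$ applied inside $\sum_{t \ne 0}|S(t)|^3$ would produce only the much weaker $p > (k-1)^6$. Reaching the sharp exponent $3$ requires separating triples by whether $\chi_1\chi_2\chi_3$ is trivial, since only these $O(k^2)$ ``diagonal'' triples forfeit the additional $\sqrt p$ cancellation in $T(a,\chi)$, while the remaining $O(k^3)$ triples absorb the extra savings.
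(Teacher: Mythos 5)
Your argument is correct: the reduction to $d=\gcd(k,p-1)$, the Gauss-sum expansion of $S(t)$, and the separation of the $(k-1)(k-2)$ diagonal triples from the rest all check out, yielding $N_3(a)\ge p\bigl(p-(k-1)^3\bigr)>0$. The paper itself gives no proof of this statement --- it is cited from Cipra's thesis and attributed to Weil and Hua--Vandiver --- and your character-sum argument is precisely the standard proof underlying that citation, so there is nothing to contrast.
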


For a survey on Waring's number over finite fields, see Castro and Rubio (Section 7.3.4, p.~211),
and Ostafe and Winterhof (Section 6.3.2.3, p.~175)
in Mullen and Panario \cite{Handbook2013}.  See also Cipra \cite{Cipra_thesis}.

We will need the following technical lemma.
\begin{lemma}
	\label{lemma:alt}
	Let $\delta = \delta(r,q)$ exist, and $k \ge 2\delta$.
	Then for every $a\in\fq$ the equation
	\begin{equation}
	\label{eqn:lemma_alt}
	x_1^r - x_2^r + x_3^r - \dotsb + (-1)^{k+1} x_k^r = a
	\end{equation}
	has a solution $(x_1,\dotso,x_k)\in\fq^k$.
\end{lemma}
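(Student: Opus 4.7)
The plan is to exploit the definition of $\delta(r,q)$ directly: for any target $a\in\fq$, the definition guarantees a signed representation
$$a = \epsilon_1 y_1^r + \epsilon_2 y_2^r + \dotsb + \epsilon_\delta y_\delta^r, \qquad \epsilon_i\in\{-1,+1\},$$
(padding with $y_i=0$ if fewer than $\delta$ terms are needed). The task is then to embed this representation into the alternating sum of length $k$ on the left-hand side of (\ref{eqn:lemma_alt}).

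The key bookkeeping step is to separate positions by sign. Among the $k$ positions $1,2,\dots,k$, there are $P = \lceil k/2 \rceil$ with coefficient $+1$ and $N = \lfloor k/2 \rfloor$ with coefficient $-1$. The hypothesis $k\ge 2\delta$ immediately gives $P\ge \delta$ and $N\ge \delta$. Let $I_+ = \{i : \epsilon_i = +1\}$ and $I_- = \{i : \epsilon_i = -1\}$; clearly $|I_+|\le \delta \le P$ and $|I_-|\le \delta \le N$, so we can choose an injection of $I_+$ into the positive-sign positions and an injection of $I_-$ into the negative-sign positions.

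Set $x_j$ equal to the corresponding $y_i$ at each of these chosen positions, and set $x_j = 0$ at all remaining positions. The chosen positive-sign positions contribute $\sum_{i\in I_+} y_i^r$, the chosen negative-sign positions contribute $-\sum_{i\in I_-} y_i^r$, and all remaining positions contribute $0^r = 0$. Summing, the left-hand side of (\ref{eqn:lemma_alt}) equals
$$\sum_{i\in I_+} y_i^r \;-\; \sum_{i\in I_-} y_i^r \;=\; \sum_{i=1}^{\delta} \epsilon_i y_i^r \;=\; a,$$
as required.

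There is essentially no hard step here: the proof is purely a pigeonhole-style matching argument, and the only point to verify is that the counts $P,N$ of the two sign-classes each accommodate the (at most $\delta$) terms of either sign arising in the $\delta$-representation of $a$, which is exactly what $k\ge 2\delta$ delivers.
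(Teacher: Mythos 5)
Your proof is correct and follows essentially the same route as the paper's: both pad the signed $\delta$-term representation of $a$ with zeros and place each $y_i$ at a position of the alternating sum carrying the sign $\epsilon_i$. The only cosmetic difference is that you justify the placement by counting the $\lceil k/2\rceil$ positive and $\lfloor k/2\rfloor$ negative positions, whereas the paper observes that any sign pattern of length $\delta$ occurs as a subsequence of the length-$k$ alternating sign sequence; both verifications are immediate from $k\ge 2\delta$.
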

\begin{proof}
	Let $a\in\fq$ be arbitrary. There exist $\varepsilon_1,\dotso,\varepsilon_\delta$, each
	$\varepsilon_i\in\{-1,1\}\subseteq \fq$, such that
	the equation
	$\sum_{i=1}^{\delta} \varepsilon_i y_i^r = a$ has a solution
	$(y_1,\dotso,y_{\delta})\in\fq^{\delta}$.
	As  $k \ge 2\delta$, the alternating sequence
	$1,-1,1,\dotso,(-1)^k$ with $k$ terms contains the sequence
	$\varepsilon_1,\dotso,\varepsilon_\delta$ as a subsequence.
	Let the indices of this subsequence be
	$j_1,j_2,\dotso,j_{\delta}$.
	For each $l$, $1\le l\le k$, let
	$x_l = 0$ if $l\neq j_i$ for any $i$, and
	$x_l = y_i$ for $l = j_i$. Then $(x_1,\dotso,x_k)$ is a solution of
	(\ref{eqn:lemma_alt}).
\end{proof}

\subsection{The Hasse-Weil bound}
In the next section we will use
the Hasse-Weil bound,
which  provides
a bound on the number of $\fq$-points on a plane non-singular absolutely irreducible projective curve over a finite field $\fq$.
If the number of points on the curve $C$ of genus $g$ over the
finite field $\fq$ is $|C(\fq)|$, then
\begin{equation}
\label{hasse_weil_bound}
||C(\fq)| - q -1|
\le
2g\sqrt{q}.
\end{equation}
  It is also known that for a non-singular curve
  defined by a homogeneous polynomial  of degree $k$, $g= (k-1)(k-2)/2$. Discussion of all related notions and a proof of this result can be found in
  Hirschfeld, Korchm\'{a}ros, Torres \cite{Hirschfeld} (Theorem 9.18, p.~343) or in Sz\H{o}nyi \cite{Szonyi1997} (p.~197).

\section{Proof of Theorem \ref{main}} \label{proofs1}

\noindent {\bf (\ref{gen_lower_bound}).}
As there is a loop at $(0,0)$, and there are arcs between  $(0,0)$ and $(x,0)$ in either direction, for every $x\in \fq^*$, the number of vertices in $D_q$ which are at distance at most 2 from $(0,0)$ is
at most $1+ (q-1)+(q-1)^2 < q^2$. Thus, there are vertices in $D_q$ which are at distance
at least 3 from $(0,0)$, and so $\diam(D_q)\ge 3$.

\bigskip

\noindent {\bf (\ref{gen_upper_bound}).}
As $D_q$ is strong, by Theorem \ref{thm_conn},
for any positive divisor $d$ of $e$, $d<e$,
$q_d\centernot\mid\gcd (p^e-1, m,n)$. As, clearly, $q_d\,|\,(p^e-1)$, either $q_d\centernot\mid m$ or $q_d\centernot\mid n$.  This implies by Theorem \ref{thm:waring_exist} that either $\gamma(m,q)$ or $\gamma(n,q)$ exists.

Let $(a,b)$ and $(u,v)$ be arbitrary vertices of $D_q$.  By (\ref{eqn:walk_length_k}), there exists a walk  of length at most $k$ from $(a,b)$ to $(u,v)$ if the  equation
\begin{equation}
\label{eqn:main}
v = x_{k-1}^m u^n- x_{k-2}^m x_{k-1}^n+\cdots +(-1)^{k-1} a^m
 x_1^n+(-1)^k b
\end{equation}
has a solution $(x_1,\ldots, x_k)\in \fq^k$.

Assume first that $\gamma_m = \gamma(m,q)$ exists.
Taking $k=6\gamma_m + 1$,
and $x_i = 0$ for $i\equiv 1 \mod 3$, and $x_i = 1$ for $i\equiv 0\mod 3$, we have that (\ref{eqn:main}) is equivalent to
\[
-x_{k-2}^m + x_{k-5}^m -\cdots +(-1)^k x_5^m + (-1)^{k-1}x_2^m  = v-(-1)^k b-u^n.
\]
As the number of terms on the left is $(k-1)/3 = 2 \gamma_m$, this equation  has a solution in $\fq^{2\gamma_m}$ by Lemma \ref{lemma:alt}.
Hence, (\ref{eqn:main}) has a solution in $\fq^{k}$.

If $\gamma_n = \gamma(n,q)$ exists, then the argument is similar: take $k = 6\gamma_n+1$,  $x_i = 0$ for $i\equiv 0 \mod 3$, and $x_i = 1$ for $i\equiv 1\mod 3$.

The result now follows from the bounds on $\gamma(r,q)$ in Theorem \ref{thm:waring_bound}.

\begin{remark}
	As $m\le n$, if $\gamma(m,q)$ exists, the upper bounds in Theorem~\ref{main},
	part {\bf (\ref{gen_upper_bound})}, can be improved by replacing $n$ by $m$. Also, if a better upper bound on $\delta(m,q)$ than $\gamma(m,q)$ (respectively, on $\delta(n,q)$ than $\gamma(n,q)$) is known,
	the upper bounds in Theorem~\ref{main}, {\bf (\ref{gen_upper_bound})},
	can be further improved: use $k = 6\delta(m,q)+1$ (respectively, 	$k = 6\delta(n,q)+1$) in the proof. Similar comments apply to other parts
	of Theorem \ref{main} as well as Theorem \ref{thm_diam_p}.
\end{remark}

\bigskip

\noindent {\bf  (\ref{diam_le_4}).}
Recall the basic fact $\gcd(r,q-1)=1 \Leftrightarrow \{x^r\colon x \in\fq\} = \fq$.


Let $k=4$. If $\gcd(m,q-1) = 1$, a solution to (\ref{eqn:walk_length_k}) of the form $(0,x_2,1,u)$ is seen to exist for any choice of $a,b,u,v\in\fq$.  If $\gcd(n,q-1) = 1$, there exists a solution of the form $(1,x_2,0,u)$. Hence,  $\diam (D_q) \le 4$.

Let $k=3$, and  $\gcd(m,q-1) = \gcd(n,q-1) = 1$.   If $a=0$, then a solution to (\ref{eqn:walk_length_k}) of the form $(x_1,1,u)$  exists. If $a\neq 0$, a solution of the form  $(x_1,0,u)$ exists.  Hence,  $D_q$ is strong and $\diam (D_q) \le 3$. Using the lower bound from part {\bf (\ref{gen_lower_bound})},   we conclude that $\diam (D_q) = 3$.

\bigskip

\noindent {\bf (\ref{main3}).} As was shown in part \ref{diam_le_4}, for any $n$,
$\diam(D(q; 1,n))\le 4$. If, additionally, $\gcd(n,q-1) = 1$, then $\diam(D(q; 1,n)) = 3$.
It turns out that if $p$ does not divide $n$, then only for finitely many $q$ is the diameter of $D(q;1,n)$ actually 4.

For $k=3$, (\ref{eqn:walk_length_k}) is equivalent to
\begin{equation}
\label{eqn:proof_hasse}
(u,v) = (x_3,x_2 x_3^n-x_1 x_2^n + a x_1^n-b),
\end{equation}
which has solution $(x_1,x_2,x_3) = (0,u^{-n}(b+v),u)$, provided $u\neq 0$.

Suppose now that $u = 0$. Aside from the trivial case $a = 0$, the question of the existence of a solution to (\ref{eqn:proof_hasse}) shall be resolved if we prove that the equation
\begin{equation}
\label{eqn:surj}
a x^n - x y^n + c = 0
\end{equation}
 has a solution for any $a, c\in\fq^*$ (for $c=0$, (\ref{eqn:surj}) has solutions).
The projective curve corresponding to this equation is the zero locus of the homogeneous polynomial
\[
 F(X,Y,Z) = aX^n Z - X Y^n + c Z^{n+1}.
\]
It is easy to see that, provided $p$ does not divide $n$,
\[
F=F_X=F_Y=F_Z =0 \;\; \Leftrightarrow \;\; X=Y=Z=0,
\]
and thus the curve has no singularities and is absolutely irreducible.

Counting the two points $[1:0:0]$ and $[0:1:0]$ on the line at infinity $Z = 0$, we obtain from (\ref{hasse_weil_bound}), the inequality
$N\ge q-1-2g\sqrt{q}$, where $N=N(c)$ is the number of solutions of (\ref{eqn:surj}). As $g= n(n-1)/2$,
solving the inequality $q-1-n(n-1)\sqrt{q}>0$ for $q$,  we obtain a lower bound on $q$  for which $N \ge 1$.

\bigskip

\noindent{\bf (\ref{bound_q_le25}a).} 
The result follows from Corollary \ref{thm:diam_le_8} by an argument similar to that of the proof of part {\bf (\ref{gen_upper_bound})}.

\bigskip

\noindent {\bf (\ref{bound_q_m4n4}b).}
For $k=13$, (\ref{eqn:walk_length_k}) is equivalent to
\[
(u,v)
=
(x_{13},
-b + a^m x_1^n -x_1^m x_2^n + x_2^m x_3^n -\dotsb - x_{11}^m x_{12}^n + x_{12}^m x_{13}^n).
\]
If $q > (m-1)^4$, set $x_1 = x_4 = x_7 = x_{10} = 0$,
$x_3 = x_6 = x_9 = x_{12} = 1$. Then
$v - u^n + b = -x_{11}^m + x_8^m - x_5^m + x_2^m$, which has a solution $(x_2,x_5,x_8,x_{11})\in\fq^4$ by Theorem \ref{thm:waring_Small_estimates} and Lemma \ref{lemma:alt}.

\bigskip

\noindent {\bf (\ref{bound_q_le6}c).}
For $k=9$, (\ref{eqn:walk_length_k}) is equivalent to
\[
(u,v)
=
(x_9,
-b + a^n x_1^n -x_1^n x_2^n + x_2^n x_3^n -\dotsb - x_7^m x_8^n + x_8^n x_9^n).
\]
If $q > (n-1)^4$, set $x_1 = x_4 = x_5 = x_8 = 0$,
$x_3 = x_7 = 1$. Then
$v + b = x_2^n + x_6^n$, which has a solution $(x_2,x_6)\in\fq^2$ by Theorem \ref{thm:waring_Small_estimates}.
\bigskip

\section{Proofs of Theorem \ref{thm_diam_p}} \label{proofs2}

\begin{lemma}\label{AutoLemma}
Let $D=D(q;m,n)$.  Then, for any $\lambda\in\mathbb{F}_q^*$, the function $\phi:V(D) \rightarrow V(D)$ given by $\phi((a,b)) = (\lambda a, \lambda^{m+n} b)$ is
a digraph  automorphism of $D$.
\end{lemma}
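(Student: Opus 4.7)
The plan is to verify the two defining conditions for $\phi$ to be a digraph automorphism: that $\phi$ is a bijection on $V(D)=\fq^2$, and that $\phi$ preserves the arc relation in both directions.

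First I would check bijectivity. Since $\lambda\in\fq^*$, the map $a\mapsto \lambda a$ is a bijection on $\fq$, and similarly $b\mapsto \lambda^{m+n}b$ is a bijection on $\fq$ (its inverse being multiplication by $\lambda^{-(m+n)}$). Hence $\phi$ is a bijection on $\fq^2$ as a product of two bijections; its explicit inverse is $(a,b)\mapsto (\lambda^{-1}a,\lambda^{-(m+n)}b)$.

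Next, I would verify that $\phi$ preserves arcs. By definition, $(a,b)\to (c,d)$ in $D$ iff $b+d = a^m c^n$. Applying $\phi$ to both endpoints, I need
\[
\lambda^{m+n}b + \lambda^{m+n}d = (\lambda a)^m(\lambda c)^n,
\]
and the right-hand side equals $\lambda^{m+n}a^m c^n$. Thus the new arc condition is $\lambda^{m+n}(b+d)=\lambda^{m+n}a^m c^n$, which is equivalent to the original $b+d=a^m c^n$ because $\lambda^{m+n}\in\fq^*$. This gives both implications at once, so $\phi$ sends arcs to arcs and non-arcs to non-arcs.

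There is no real obstacle here; the key observation driving the whole argument is that the exponent $m+n$ on the second coordinate is precisely chosen so that scaling commutes past the monomial $x^m y^n$ defining the digraph. In a short write-up I would present the two displayed equalities above and conclude immediately.
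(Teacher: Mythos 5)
Your proof is correct and follows exactly the outline the paper gives (the paper states that the proof amounts to checking bijectivity and that $\phi$ preserves adjacency, and omits the details). Your computation $\lambda^{m+n}(b+d)=(\lambda a)^m(\lambda c)^n=\lambda^{m+n}a^mc^n$ is precisely the intended verification.
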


The proof of the lemma is straightforward. It amounts to showing that $\phi$ is a bijection and that it  preserves adjacency: ${\bf x} \to {\bf y}$ if and only if $\phi({\bf x}) \to \phi({\bf y})$.  We omit the details. Due to Lemma \ref{AutoLemma},  any walk in $D$ initiated at a vertex $(a,b)$ corresponds to a walk initiated at a vertex $(0,b)$ if $a=0$, or at a vertex $(1,b')$,  where $b'= a^{-m-n} b$, if $a\neq 0$. This implies that if we wish to show that $\diam (D_p) \le 2p-1$, it is sufficient to show that the distance from any vertex $(0,b)$ to any other vertex is at most $2p-1$,  and that the distance from any  vertex $(1,b)$ to any other vertex is at most $2p-1$.

First we note that by Theorem \ref{thm_conn}, $D_p = D(p;m,n)$ is strong for any choice of $m,n$.

For $a\in\mathbb{F}_p$, let integer $\overline{a}$,  $0\le \overline{a} \le p-1$, be the  representative of the residue class $a$.


It is easy to check that $\diam (D(2; 1,1)) = 3$.
Therefore, for the remainder of the proof, we may assume that $p$ is odd.
\bigskip

\noindent{\bf (\ref{diam_bound_p}).}
In order to show that diam$(D_p) \le 2p-1$, we use (\ref{eqn:walk_length_k}) with $k= 2p-1$,  and prove that for any two vertices $(a,b)$ and $(u,v)$ of $D_p$ there
is always a solution $(x_1, \ldots, x_{2p-1})\in \fq^{2p-1}$ of
$$(u,v) = (x_{2p-1}, -b + a^mx_1^n - x_1^mx_2^n + x_2^mx_3^n - \dots -
x_{2p-3}^mx_{2p-2}^n + x_{2p-2}^mx_{2p-1}^n),
$$
or, equivalently, a solution ${\bf x} =  (x_1, \ldots, x_{2p-2})\in \fq^{2p-2}$ of
\begin{equation} \label{eq:1}
a^mx_1^n - x_1^mx_2^n + x_2^mx_3^n - \dots -
x_{2p-3}^mx_{2p-2}^n + x_{2p-2}^mu^n = b+v.
\end{equation}
As the upper bound $2p-1$ on the diameter is exact and holds for all $p$, we need a more  subtle  argument compared to the ones we used before.  The only way we can make it is (unfortunately) by  performing  a case analysis on $\overline{b+v}$ with a nested case structure.  In most of the cases we just exhibit  a solution ${\bf x}$ of (\ref{eq:1}) by describing its components $x_i$.
It is always a straightforward verification that ${\bf x}$  satisfies (\ref{eq:1}),  and we will suppress our comments as cases proceed.

Our first observation is that if $\overline{b+v} = 0$, then ${\bf x} = (0,\dots, 0)$ is a  solution to (\ref{eq:1}).
We may assume now that $\overline{b+v}\ne 0$.\\

\noindent\underline{Case 1.1}: $\overline{b+v}\ge \frac{p-1}{2} + 2$

\noindent 
We define the components of ${\bf x}$ as follows:

if $1\le i\le 4(p-(\overline{b+v}))$, then $x_i=0$ for $i\equiv 1,2 \mod{4}$, and $x_i=1$ for $i\equiv 0,3 \mod{4}$;

if $4(p-(\overline{b+v}))< i \le 2p-2$, then $x_i=0$.

Note that $x_i^mx_{i+1}^n = 0$ unless $i\equiv 3 \mod 4$,
in which case $x_i^mx_{i+1}^n = 1$. If we group the terms
in groups of four so that each group is of the form
\[
-x_i^mx_{i+1}^n+x_{i+1}^mx_{i+2}^n-x_{i+2}^mx_{i+3}^n+x_{i+3}^mx_{i+4}^n,
\]
where $i\equiv 1 \mod 4$, then assuming $i$, $i+1$, $i+2$, $i+3$, and $i+4$ are within the range of
$1\le i<i+4 \le 4(\overline{b+v})$, it is easily seen that one group contributes
$-1$ to
\[
a^mx_1^n - x_1^mx_2^n + x_2^mx_3^n - \dots - x_{2p-3}^mx_{2p-2}^n
+ x_{2p-2}^mx_{2p-1}^n.
\]
There are $\frac{4(p-(\overline{b+v}))}{4} = p-(\overline{b+v})$ such
groups, and so the solution provided adds $-1$ exactly
$p-(\overline{b+v})$ times.
Hence, ${\bf x}$ is a solution to (\ref{eq:1}).
\medskip

For the remainder of the proof, solutions to (\ref{eq:1}) will
be given without justification as the justification is similar
to what's been done above.

\vspace{5mm}

\noindent\underline{Case 1.2}: $\overline{b+v}\le \frac{p-1}{2}$

\noindent We define the components of ${\bf x}$ as follows:

if  $1\le i\le 4(\overline{b+v})-1$, then $x_i=0$ for $i\equiv 0,1 \mod{4}$, and $x_i=1$ for $i\equiv 2, 3 \mod{4}$;

if  $4(\overline{b+v})-1< i \le 2p-2 $, then $x_i=0$.

\vspace{5mm}
\noindent\underline{Case 1.3}: $\overline{b+v}= \frac{p-1}{2}+1$

This case requires several nested subcases.

\vspace{3mm}
\underline{Case 1.3.1}: $u=x_{2p-1}=0$

Here, there is no need to restrict $x_{2p-2}$ to be
$0$. The components of a solution ${\bf x}$ of (\ref{eq:1}) are defined as:

if  $1\le i \le 2p-2$, then $x_i=0$ for $i\equiv 1,2 \mod{4}$, and $x_i=1$ for $i\equiv 0,3 \mod{4}$.

\vspace{3mm}

\underline{Case 1.3.2}: $a=0$

Here, there is no need to restrict $x_1$ to be 0. Therefore, the components of a solution ${\bf x}$ of (\ref{eq:1})
are defined as: 

if  $1\le i\le 2p-2$, then $x_i=0$ for $i\equiv 0,3 \mod{4}$, and $x_i=1$ for $i\equiv 1, 2 \mod{4}$.

\vspace{5mm}
\underline{Case 1.3.3}: $u\ne 0$ and $a\ne 0$

Because of Lemma \ref{AutoLemma}, we may assume without loss of generality that $a=1$.
Let $x_{2p-2} = 1$, so that $x_{2p-2}^mu^n=u^n\ne 0$ and let $t=\overline{b+v-u^n}$. Note that
$t\ne\frac{p-1}{2}+1$.

\vspace{3mm}

\underline{Case 1.3.3.1}: $t=0$

The components of a solution ${\bf x}$ of (\ref{eq:1}) are defined as: $x_{2p-2} = 1$, and

if  $1\le i < 2p-2 $, then $x_i=0$.

\vspace{3mm}

\underline{Case 1.3.3.2}: $0< t\le \frac{p-1}{2}$

The components of a solution ${\bf x}$ of (\ref{eq:1}) are defined as: $x_{2p-2} = 1$, and

if  $1\le i\le 4(t-1)+1$, then $x_i=0$ for $i\equiv 2,3 \mod{4}$, and $x_i=1$ for $i\equiv 0,1 \mod{4}$;

if  $4(t-1)+1< i < 2p-2 $, then $x_i=0$.

\vspace{3mm}

\underline{Case 1.3.3.3}: $t\ge \frac{p-1}{2}+2$

The components of a solution ${\bf x}$ of (\ref{eq:1}) are defined as: $x_{2p-2} = 1$, and

if  $1\le i\le 4(p-t)$, then $x_i=0$ for $i\equiv 1,2 \mod{4}$, and $x_i=1$ for $i\equiv 0,3 \mod{4}$;

if  $4(p-t)< i < 2p-2 $, then $x_i=0$.\\

The whole range  of possible values $\overline{b+v}$ has been checked.  Hence, $\diam(D)\le 2p-1$.

%
%
%
%
%
%
%
%
%
%
%

\bigskip

We now show that if $\diam(D)=2p-1$, then $m=n=p-1$. To do so, we assume
that $m\ne p-1$ or $n\ne p-1$ and prove the contrapositive. Specifically, we show that $\diam(D)\le 2p-2<2p-1$ by
again using (\ref{eqn:walk_length_k}) but with $k= 2p-2$.   We prove that for any two vertices $(a,b)$ and $(u,v)$ of $D_p$ there
is always a solution $(x_1, \ldots, x_{2p-2})\in \fq^{2p-2}$ of
$$(u,v) = (x_{2p-2}, b - a^mx_1^n + x_1^mx_2^n - \dots -
x_{2p-4}^mx_{2p-3}^n + x_{2p-3}^mx_{2p-2}^n),
$$
or, equivalently, a solution ${\bf x} =  (x_1, \ldots, x_{2p-3})\in \fq^{2p-3}$ of
\begin{equation} \label{eq:2}
-a^mx_1^n + x_1^mx_2^n - x_2^mx_3^n + \dots -
x_{2p-4}^mx_{2p-3}^n + x_{2p-3}^mu^n = -b+v.
\end{equation}
We perform a case analysis on $\overline{-b+v}$.

\vspace{5mm}
Our first observation is that if $\overline{-b+v} = 0$, then ${\bf x} = (0,\dots, 0)$ is a  solution to (\ref{eq:2}). We may
assume for the remainder of the proof that $\overline{-b+v}\ne 0$.

\vspace{3mm}

\noindent\underline{Case 2.1}: $\overline{-b+v}\le \frac{p-1}{2}-1$

\noindent We define the components of ${\bf x}$ as follows:

if  $1\le i\le 4(\overline{-b+v})$, then $x_i=0$ for $i\equiv 1,2 \mod{4}$, and $x_i=1$ for $i\equiv 0, 3 \mod{4}$;

if  $4(\overline{-b+v})< i \le 2p-3 $, then $x_i=0$.

\vspace{3mm}

\noindent\underline{Case 2.2}: $\overline{-b+v}\ge \frac{p-1}{2}+2$

\noindent We define the components of ${\bf x}$ as follows:

if  $1\le i\le 4(p-(\overline{-b+v}))-1$, then $x_i=0$ for $i\equiv 0,1 \mod{4}$, and $x_i=1$ for $i\equiv 2, 3 \mod{4}$;

if  $4(p-(\overline{-b+v}))-1< i \le 2p-3 $, then $x_i=0$.

\vspace{3mm}

\noindent\underline{Case 2.3}: $\overline{-b+v}= \frac{p-1}{2}$

\underline{Case 2.3.1}: $a=0$

We define the components of ${\bf x}$ as:

if  $1\le i\le 2p-3$, then $x_i=0$ for $i\equiv 0,3 \mod{4}$, and $x_i=1$ for $i\equiv 1, 2 \mod{4}$.

\vspace{3mm}

\underline{Case 2.3.2}: $a\ne 0$

Here, we may assume without loss of generality that $a=1$ by Lemma (\ref{AutoLemma}).

\vspace{3mm}

\underline{Case 2.3.2.1}: $n\ne p-1$

If $n\ne p-1$, then there exists $\beta\in\mathbb{F}_p^*$ such that $\beta^n\not\in\{0,1\}$. For such a $\beta$,
let $x_1=\beta$ and consider $t=\overline{-b+v+a^mx_1^n}=\overline{-b+v+\beta^n}\not\in\{\frac{p-1}{2}, \frac{p-1}{2}+1 \}$.

\vspace{3mm}

\underline{Case 2.3.2.1.1}: $t=0$

\noindent\noindent We define the components of ${\bf x}$ as: $x_1=\beta$ and

if  $2\le i \le 2p-3 $, then $x_i=0$.

\vspace{3mm}

\underline{Case 2.3.2.1.2}: $t\le \frac{p-1}{2}-1$

\noindent\noindent We define the components of ${\bf x}$ as: $x_1=\beta$ and

if  $2\le i\le 4t$, then $x_i=0$ for $i\equiv 1,2 \mod{4}$, and $x_i=1$ for $i\equiv 0, 3 \mod{4}$;

if  $4t< i \le 2p-3 $, then $x_i=0$.

\vspace{3mm}

\underline{Case 2.3.2.1.3}: $t\ge \frac{p-1}{2}+2$

\noindent We define the components of ${\bf x}$ as: $x_1=\beta$ and

if  $2\le i\le 4(p-t)+1$, then $x_i=0$ for $i\equiv 2,3 \mod{4}$, and $x_i=1$ for $i\equiv 0, 1 \mod{4}$;

if  $4(p-t)+1< i \le 2p-3 $, then $x_i=0$.

\vspace{3mm}

\underline{Case 2.3.2.2}: $n=p-1$

\vspace{3mm}

\underline{Case 2.3.2.2.1}: $u\in\mathbb{F}_p^*$

Here, we have that $u^n=1$, so that the components of a solution ${\bf x}$ of (\ref{eq:2}) are defined as:

if  $1\le i\le 2p-3$, then $x_i=0$ for $i\equiv 1,2 \mod{4}$, and $x_i=1$ for $i\equiv 0, 3 \mod{4}$.

\vspace{3mm}

\underline{Case 2.3.2.2.2}: $u=0$

Since $n=p-1$, it must be the case that $m\ne p-1$ so that there exists $\alpha\in\mathbb{F}_p^*$ such that $\alpha^m\not\in\{0.1 \}$.
For such an $\alpha$, let $x_2=\alpha, x_3=1$ and consider $t=\overline{-b+v+x_2^mx_3^n}=\overline{-b+v+\alpha^m}
\not\in\{\frac{p-1}{2}, \frac{p-1}{2}+1 \}$.

\vspace{3mm}

\underline{Case 2.3.2.2.2.1}: $t=0$

\noindent We define the components of ${\bf x}$ as: $x_1=0, x_2=\alpha, x_3=1$ and

if  $4 \le  i \le 2p-3 $, then $x_i=0$.

\vspace{3mm}

\underline{Case 2.3.2.2.2.2}: $t\le \frac{p-1}{2}-1$

\noindent We define the components of ${\bf x}$ as: $x_1=0, x_2=\alpha, x_3=1$ and

if  $4\le i\le 4t$, then $x_i=0$ for $i\equiv 1,2 \mod{4}$, and $x_i=1$ for $i\equiv 0, 3 \mod{4}$;

if  $4t< i \le 2p-3 $, then $x_i=0$.

\vspace{3mm}

\underline{Case 2.3.2.2.2.3}: $t\ge \frac{p-1}{2}+2$

\noindent We define the components of ${\bf x}$ as: $x_1=0, x_2=\alpha, x_3=1$ and

if  $4\le i\le 4(p-t)+3$, then $x_i=0$ for $i\equiv 0,1 \mod{4}$, and $x_i=1$ for $i\equiv 2, 3 \mod{4}$;

if  $4(p-t)+3< i \le 2p-3 $, then $x_i=0$.

\vspace{3mm}

\noindent\underline{Case 2.4}: $\overline{-b+v}= \frac{p-1}{2}+1$

\vspace{3mm}

\underline{Case 2.4.1}: $u=0$

We define the components of ${\bf x}$ as:

if  $1\le i\le 2p-3$, then $x_i=0$ for $i\equiv 0,1 \mod{4}$, and $x_i=1$ for $i\equiv 2, 3 \mod{4}$.

\vspace{3mm}

\underline{Case 2.4.2}: $u\ne 0$

Here, we may assume without loss of generality that $u=1$ by Lemma (\ref{AutoLemma}).

\vspace{3mm}

\underline{Case 2.4.2.1}: $m\ne p-1$

If $m\ne p-1$, then there exists $\alpha\in\mathbb{F}_p^*$ such that $\alpha^m\not\in\{0,1\}$. For such an $\alpha$,
let $x_{2p-3}=\alpha$ and consider $t=\overline{-b+v-x_{2p-3}^mu^n}=\overline{-b+v-\alpha^m}\not\in\{\frac{p-1}{2}, \frac{p-1}{2}+1 \}$.

\vspace{3mm}

\underline{Case 2.4.2.1.1}: $t=0$

\noindent We define the components of ${\bf x}$ as: $x_{2p-3}=\alpha$ and

if  $1 \le i \le 2p-4 $, then $x_i=0$.

\vspace{3mm}

\underline{Case 2.4.2.1.2}: $t\le \frac{p-1}{2}-1$

\noindent We define the components of ${\bf x}$ as: $x_{2p-3}=\alpha$ and

if  $1\le i\le 4t$, then $x_i=0$ for $i\equiv 1,2 \mod{4}$, and $x_i=1$ for $i\equiv 0, 3 \mod{4}$;

if  $4t< i \le 2p-4 $, then $x_i=0$.

\vspace{3mm}

\underline{Case 2.4.2.1.3}: $t\ge \frac{p-1}{2}+2$

\noindent We define the components of ${\bf x}$ as: $x_{2p-3}=\alpha$ and

if  $1\le i\le 4(p-t)-1$, then $x_i=0$ for $i\equiv 0,1 \mod{4}$, and $x_i=1$ for $i\equiv 2, 3 \mod{4}$;

if  $4(p-t)-1< i \le 2p-4 $, then $x_i=0$.

\vspace{3mm}

\underline{Case 2.4.2.2}: $m=p-1$

\vspace{3mm}

\underline{Case 2.4.2.2.1}: $a\in\mathbb{F}_p^*$

Here, we have that $a^m=1$, so that the components of a solution ${\bf x}$ of (\ref{eq:2}) are defined as:

if  $1\le i\le 2p-5$, then $x_i=0$ for $i\equiv 2,3 \mod{4}$, and $x_i=1$ for $i\equiv 0, 1 \mod{4}$.

\vspace{3mm}

\underline{Case 2.4.2.2.2}: $a=0$

Since $m=p-1$, it must be the case that $n\ne p-1$ so that there exists $\beta\in\mathbb{F}_p^*$ such that $\beta^n\not\in\{0.1 \}$.
For such a $\beta$, let $x_{2p-5}=1, x_{2p-4}=\beta$ and consider $t=\overline{-b+v-x_{2p-5}^mx_{2p-4}^n}=\overline{-b+v-\beta^n}
\not\in\{\frac{p-1}{2}, \frac{p-1}{2}+1 \}$.

\vspace{3mm}

\underline{Case 2.4.2.2.2.1}: $t=0$

\noindent We define the components of ${\bf x}$ as: $x_{2p-5}=1, x_{2p-4}=\beta, x_{2p-3}=0$ and

if  $1\le i \le 2p-6 $, then $x_i=0$.

\vspace{3mm}

\underline{Case 2.4.2.2.2.2}: $t\le \frac{p-1}{2}-1$

\noindent We define the components of ${\bf x}$ as: $x_{2p-5}=1, x_{2p-4}=\beta, x_{2p-3}=0$ and

if  $1\le i\le 4t-2$, then $x_i=0$ for $i\equiv 0,3 \mod{4}$, and $x_i=1$ for $i\equiv 1, 2 \mod{4}$;

if  $4t-2< i \le 2p-6 $, then $x_i=0$.

\vspace{3mm}

\underline{Case 2.4.2.2.2.3}: $t\ge \frac{p-1}{2}+2$

\noindent We define the components of ${\bf x}$ as: $x_{2p-5}=1, x_{2p-4}=\beta, x_{2p-3}=0$ and

if  $1\le i\le 4(p-t)-1$, then $x_i=0$ for $i\equiv 0,1 \mod{4}$, and $x_i=1$ for $i\equiv 2, 3 \mod{4}$;

if  $4(p-t)-1< i \le 2p-6 $, then $x_i=0$.\\

All cases have been checked, so if $m\ne p-1$ or $n\ne p-1$, then $\diam(D) < 2p-1$.

\vspace{5mm}

We now prove that if $m=n=p-1$, then $d:= \diam (D(p;m,n))=2p-~1$.
In order to do this, we explicitly describe the structure of the digraph $D(p;p-1,p-1)$,
from which the diameter becomes clear. In this description, we
look at sets of vertices of a given distance from the vertex $(0,0)$, and show that some of them are at distance $2p-1$.
We recall the following important general properties of our digraphs that will be used in the proof.
\begin{itemize}
\item Every out-neighbor $(u,v)$ of a vertex $(a,b)$ of $D(q;m,n)$ is completely determined by its first component $u$.

\item Every vertex of $D(q;m,n)$ has its out-degree and in-degree  equal $q$.
\item In $D(q; m,m)$,   ${\bf x}\to {\bf y}$ if and only if
${\bf y}\to {\bf x}$
\end{itemize}

In $D(p;p-1,p-1)$, we have that $(x_1, y_1)\to
(x_2, y_2)$ if and only if
\[
y_1 + y_2 = x_1^{p-1}x_2^{p-1} = \begin{cases}
      0 & \textrm{ if $x_1=0$ or $x_2=0$}, \\
      1 & \textrm{ if $x_1$ and $x_2$ are non-zero}. \\
   \end{cases}
\]
For notational convenience, we set
\[
(*, a) = \{(x, a): x\in\mathbb{F}_p^*\}
\]
and, for $1\le k\le d$, let
\[
N_k = \{v\in V(D(p;m,n)): \text{dist}((0,0), v) = k \}.
\]
We assume that $N_0=\{(0,0)\}$.
It is clear from this definition that these $d+1$ sets  $N_k$ partition the vertex set of $D(p;p-1,p-1)$;  for every $k$, $1\le k\le d-1$,  every out-neighbor of a vertex from $N_k$ belongs  to $N_{k-1}\cup N_k\cup N_{k+1}$,  and $N_{k+1}$ is the set of all out-neighbors of all vertices from
$N_k$ which are not in $N_{k-1}\cup N_k$.

Thus we have $N_0=\{(0,0)\}$, $N_1= (*,0)$, $N_2=(*,1)$, $N_3=\{(0,-1)\}$. If $p>2$, $N_4=\{(0,1)\}$, $N_5=(*,-1)$.  As there exist two (opposite) arcs between each vertex of $(*,x)$ and each vertex $(*,-x+1)$,  these  subsets of vertices induce the complete bipartite subdigraph $\overrightarrow{K}_{p-1,p-1}$ if $x\ne -x+1$, and the complete subdigraph $\overrightarrow{K}_{p-1}$ if $x =-x+1$. Note that our $\overrightarrow{K}_{p-1,p-1}$ has no loops, but $\overrightarrow{K}_{p-1}$  has a loop on every vertex.
Digraph $D(5;4,4)$ is depicted in Fig. $1.2$.

\begin{figure}
\begin{center}
\begin{tikzpicture}

\tikzset{vertex/.style = {shape=circle,draw,inner sep=2pt,minimum size=.5em, scale = 1.0},font=\sffamily\scriptsize\bfseries}
\tikzset{edge/.style = {->,> = stealth'},shorten >=1pt}

\node[vertex,label={[xshift=-0.2cm, yshift=0.0cm]$(0,0)$}] (a) at  (0,0) {};

\node[vertex] (b1) at  (1,1.5) {};
\node[vertex] (b2) at  (1,.5) {};
\node[vertex] (b3) at  (1,-.5) {};
\node[vertex,label={[xshift=0.0cm, yshift=-0.8cm]$(\ast,0)$}] (b4) at  (1,-1.5) {};

\node[vertex] (c1) at  (2,1.5) {};
\node[vertex] (c2) at  (2,.5) {};
\node[vertex] (c3) at  (2,-.5) {};
\node[vertex,label={[xshift=0.0cm, yshift=-0.8cm]$(\ast,1)$}] (c4) at  (2,-1.5) {};

\node[vertex,label={[xshift=0.25cm, yshift=-0.8cm]$(0,-1)$}] (d) at  (3,0) {};

\node[vertex,label={[xshift=-0.2cm, yshift=0.0cm]$(0,1)$}] (e) at  (4,0) {};

\node[vertex] (f1) at  (5,1.5) {};
\node[vertex] (f2) at  (5,.5) {};
\node[vertex] (f3) at  (5,-.5) {};
\node[vertex,label={[xshift=0.0cm, yshift=-0.8cm]$(\ast,-1)$}] (f4) at  (5,-1.5) {};

\node[vertex] (g1) at  (6,1.5) {};
\node[vertex] (g2) at  (6,.5) {};
\node[vertex] (g3) at  (6,-.5) {};
\node[vertex,label={[xshift=0.0cm, yshift=-0.8cm]$(\ast,2)$}] (g4) at  (6,-1.5) {};

\node[vertex,label={[xshift=0.25cm, yshift=-0.8cm]$(0,-2)$}] (h) at  (7,0) {};

\node[vertex,label={[xshift=-0.3cm, yshift=0.00cm]$(0,2)$}] (i) at  (8,0) {};

\node[vertex] (j1) at  (9,1.5) {};
\node[vertex] (j2) at  (9,.5) {};
\node[vertex] (j3) at  (9,-.5) {};
\node[vertex,label={[xshift=0.0cm, yshift=-0.8cm]$(\ast,-2)$}] (j4) at  (9,-1.5) {};

\path
    (a) edge [->,>={stealth'[flex,sep=-1pt]},loop,out=240,in=270, looseness = 50] node {} (a);

\foreach \x in {b1,b2,b3,b4}
{
    \draw [edge] (a) to (\x);
    \draw [edge] (\x) to (a);
}

\foreach \x in {b1,b2,b3,b4}
{
    \foreach \y in {c1,c2,c3,c4}
    {
            \draw [edge] (\x) to (\y);
            \draw [edge] (\y) to (\x);
    }
}

\foreach \x in {c1,c2,c3,c4}
{
    \draw [edge] (d) to (\x);
    \draw [edge] (\x) to (d);
}

\draw [edge] (d) to (e);
\draw [edge] (e) to (d);

\foreach \x in {f1,f2,f3,f4}
{
    \draw [edge] (e) to (\x);
    \draw [edge] (\x) to (e);
}

\foreach \x in {f1,f2,f3,f4}
{
    \foreach \y in {g1,g2,g3,g4}
    {
            \draw [edge] (\x) to (\y);
            \draw [edge] (\y) to (\x);
    }
}

\foreach \x in {g1,g2,g3,g4}
{
    \draw [edge] (h) to (\x);
    \draw [edge] (\x) to (h);
}

\draw [edge] (h) to (i);
\draw [edge] (i) to (h);

\foreach \x in {j1,j2,j3,j4}
{
    \draw [edge] (i) to (\x);
    \draw [edge] (\x) to (i);
}

\path
    (j1) edge [->,>={stealth'[flex,sep=-1pt]},loop,out=30,in=-20, looseness = 35] node {} (j1);
\path
    (j2) edge [->,>={stealth'[flex,sep=-1pt]},loop,out=30,in=-20, looseness = 35] node {} (j2);
\path
    (j3) edge [->,>={stealth'[flex,sep=-1pt]},loop,out=30,in=-20, looseness = 35] node {} (j3);
\path
    (j4) edge [->,>={stealth'[flex,sep=-1pt]},loop,out=30,in=-20, looseness = 35] node {} (j4);

\path
(j1) edge[bend right,<->,>=stealth'] node [left] {} (j2);
\path
(j1) edge[bend right = 60,<->,>=stealth'] node [left] {} (j3);
\path
(j1) edge[bend right = 320,<->,>=stealth'] node [left] {} (j4);
\path
(j2) edge[bend right,<->,>=stealth'] node [left] {} (j3);
\path
(j2) edge[bend right = 60,<->,>=stealth'] node [left] {} (j4);
\path
(j3) edge[bend right,<->,>=stealth'] node [left] {} (j4);
\end{tikzpicture}
\caption{The digraph $D(5;4,4)$: $x_2+y_2 = x_1^4y_1^4$.}
\end{center}
\end{figure}
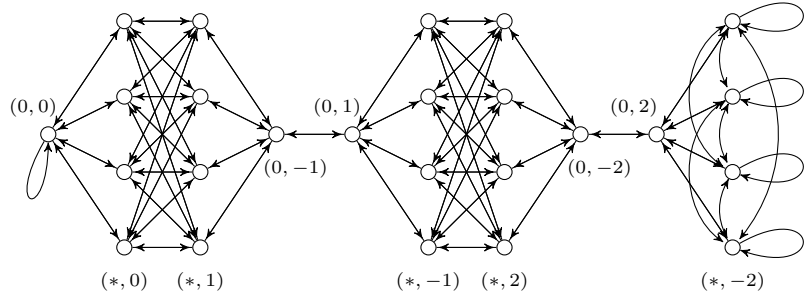

The structure of $D(p;p-1,p-1)$ for any other prime $p$ is similar. We can describe it as follows:    for each $t\in \{0,1, \ldots, (p-1)/2\}$, let
$$
N_{4{\overline t}} = \{(0, t)\}, \;\;
N_{4{\overline t}+1} = (*, -t),
$$
and for each $t\in \{0,1, \ldots, (p-3)/2\}$, let
$$
N_{4{\overline t}+2} = (*, t+1), \;
N_{4{\overline t}+3} = \{(0, -t-1)\}.
$$
Note that for $0\le {\overline t}<(p-1)/2$, $N_{4{\overline t}+1}\neq N_{4{\overline t}+2}$,  and for ${\overline t}=(p-1)/2$, $N_{2p-1}=(*,(p+1)/2)$.  Therefore, for $p\ge 3$,  $D(p;p-1,p-1)$ contains $(p-1)/2$ induced copies of
$\overrightarrow{K}_{p-1,p-1}$ with partitions  $N_{4{\overline t}+1}$ and $N_{4{\overline t}+2}$,  and a copy of $\overrightarrow{K}_{p-1}$  induced by $N_{2p-1}$. The proof is a trivial induction on $\overline{t}$. Hence, $\diam (D(p;p-1,p-1)) = 2p-1$.  This ends the proof  of Theorem~\ref{thm_diam_p}~(\ref{diam_bound_p}).

\bigskip

\noindent{\bf (\ref{bound_p_sqrt60}).}
We follow the argument of the proof of Theorem \ref{main}, part {\bf (\ref{gen_upper_bound})} and use Lemma \ref{lemma:alt}, with $k = 6\delta(m,p)+1$. We note, additionally, that if $m\not\in\{p,(p-1)/2\}$, then $\gcd(m,p-1) < (p-1)/2$, which implies $|\{ x^m \colon x\in\mathbb{F}_p^\ast \} | > 2$. The result then follows from Theorem \ref{thm:Cochrane_Pinner}.

\bigskip

\noindent{\bf (\ref{bound_p_le10}).}
We follow the argument of the proof of Theorem \ref{main}, part {\bf (\ref{bound_q_m4n4}b)} and use Lemma \ref{lemma:alt} and Theorem \ref{thm:waring_small_estimates}.
\medskip

This ends the proof of  Theorem~\ref{thm_diam_p}.

\bigskip

\section{Concluding remarks.}\label{open}

Many  results in this paper follow the same pattern: if Waring's number $\delta(r,q)$ exists and is bounded above by $\delta$, then one can show that $\diam(D(q;m,n))\le 6\delta + 1$. Determining the exact value of  $\delta(r,q)$ is an open  problem,  and it is likely to be very hard. Also,  the upper bound $6\delta +1$ is not exact in general. Out of all partial results concerning  $\delta(r,q)$, we used only those ones which helped us deal with the cases of the diameter of $D(q; m,n)$ that we considered, especially where the diameter was small.  We left out applications of all asymptotic bounds on  $\delta(r,q)$. Our computer work demonstrates  that some upper bounds on the diameter  mentioned in this paper are still far from being tight.  Here we wish to mention only a few strong patterns that we observed but have not been able to prove so far. We state them as problems.
\bigskip

\noindent{\bf Problem 1.}
 Let $p$ be prime, $q=p^e$, $e \ge 2$, and suppose $D(q;m,n)$ is strong. Let
 $r$ be the largest divisor of $q-1$
 not divisible by any
 $q_d = (p^e-1)/(q^d-1)$
 where $d$ is a positive divisor of $e$ smaller than $e$. Is it true that
\[
\max_{1\le m\le n\le q-1}
\{
\diam(D(q;m,n))
\}
=
\diam(D(q;r,r))?
\]
Find an upper bound on $\diam(D(q;r,r))$ better than the one of
Theorem \ref{main}, part {\bf (\ref{bound_q_le6}c)}.
\bigskip

\noindent{\bf Problem 2.} Is it true that for every prime $p$ and  $1\le m \le n$,
$(m,n)\neq (p-1,p-1)$, $\diam (D(p;m,n)) \le (p+3)/2$ with the equality  if and only if $(m,n)=((p-1)/2, (p-1)/2)$ or $(m,n)=((p-1)/2, p-1)$?

\bigskip

\noindent{\bf Problem 3.}  Is it true that  for every prime $p$,  $\diam (D(p;m,n))$ takes only  one of two consecutive values which are completely determined by $\gcd((p-1, m, n)$?

\section{Acknowledgement}
The authors are thankful to the anonymous referee whose careful reading and thoughtful comments led to a number of significant improvements in the paper.

\end{document}